\newcommand{\scal}{\mathrm{R}}
\newcommand{\hess}{\mathrm{Hess}}
\newcommand{\R}{\mathbb{R}}
\newcommand{\C}{\mathcal{C}}
\renewcommand{\r}{\rightarrow}
\newcommand{\ric}{\mathrm{Ric}}
\renewcommand{\div}{\mathrm{div}}
\newcommand{\ov}{\overline}
\renewcommand{\geq}{\geqslant}
\renewcommand{\leq}{\leqslant}
\newcommand{\tr}{\mathrm{tr}}
\DeclareMathOperator{\Rm}{\operatorname{Rm}}
\newtheorem{theorem}{Theorem}[subsection]
\newtheorem{corollary}[theorem]{Corollary}
\newtheorem{lemma}[theorem]{Lemma}
\newtheorem{proposition}[theorem]{Proposition}
\newtheorem*{remark}{Remark}
\theoremstyle{definition}
\newtheorem{definition}{Definition}[section]
\title[Dynamical functionals on ancient ARF Ricci flows]{Dynamical functionals on ancient ARF Ricci flows}
\author[Isaac M. Lopez]{Isaac M. Lopez}
\author[Rio Schillmoeller]{Rio Schillmoeller}
\address{University of Chicago, Department of Mathematics, 5734 S University Avenue, Chicago, IL 60637, USA.}
\email{iml@uchicago.edu}
\address{Stanford Online High School, 415 Broadway Academy Hall, Floor 2, 8853, 415 Broadway, Redwood City, CA 94063.}
\email{rioschillmoeller@gmail.com}
\numberwithin{equation}{section}
\begin{document}

\begin{abstract}
    We introduce a dynamical energy functional on compact ancient asymptotically Ricci-flat Ricci flows with modest decay using limits of conjugate heat flows. This functional satisfies a steady Ricci breather-type rigidity and provides an upper bound for the ordinary $\lambda$-functional while retaining many of its properties. In addition, motivated by work of Colding and Minicozzi, we derive local eigenvalue estimates for normalized Ricci flows coupled with conjugate heat flows.
\end{abstract}

\maketitle

\setcounter{tocdepth}{1}

\tableofcontents

\section{Introduction}

Perelman's functionals have been extensively studied on solutions to the Ricci flow equation defined on compact manifolds. A classical result in this setting asserts that if the $\lambda$-functional assumes the same value at two distinct times along a Ricci flow (that is, if the flow is a steady Ricci breather), then the solution in question is a Ricci-flat, steady gradient Ricci soliton between those two times. These functionals have found a myriad of other applications to geometric problems concerning compact $3$-manifolds. Their utility has recently catalyzed efforts to relate them to the limiting behavior of the underlying Ricci flow solutions on which they are defined. One such application is the dynamical stability of Ricci-flat metrics. A metric is said to be \emph{dynamically stable} if all sufficiently close metrics converge to it along the Ricci flow. In \cite{Ye1993}, it is shown that any sufficiently pinched Einstein metric of nonzero scalar curvature is dynamically stable in the $C^2$-topology. \cite{GuentherIsenbergKnopf2002} addresses the same question for Ricci-flat metrics, concluding that any metric in a sufficiently small $C^{2, \alpha}$-neighborhood of a Ricci-flat metric converges exponentially quickly to a Ricci-flat metric. Furthermore, \cite{Haslhofer-Muller} and \cite{sesum} connect dynamical stability to \emph{linear stability}, which is related to the stability of the Perelman's energy functional at that metric, understood in the classical sense.

Another research direction related to geometric functionals on compact manifolds was initiated by Hein and Naber. In \cite{hein-naber}, they introduce a time-dependent, pointed entropy functional along a solution to the Ricci flow that is defined in terms of Perelman's entropy functional evaluated at a pointed conjugate heat kernel. Lower bounds for the pointed entropy may be converted into local curvature upper bounds along the Ricci flow in question. Analogues of the pointed entropy have found applications to problems concerning classifications of Ricci flows on certain classes of non-compact manifolds, as discussed in \cite{bamler-21-2}, \cite{bamler-21-1}, \cite{chan-ma-zhang-1}, \cite{chan-ma-zhang-2}, and \cite{lopez-ozuch}.

The principal purpose of this paper is to liaise between the dynamical stability of Ricci-flat metrics and dynamical functionals defined on ancient Ricci flows on compact manifolds converging to Ricci-flat metrics. More precisely, we define a canonical dynamical energy functional on certain classes of such flows in terms of conjugate heat flows. By canonical, we intend to emphasize that the functional is independent of the time at which we begin running the heat flow backwards in time. The range of ancient flows on which this functional is defined is dictated by the Lojasiewicz-Simons inequality of Haslhofer-Müller \cite[Theorem 1.3]{Haslhofer-Muller}, which plays a salient role in proving their dynamical stability results.

\subsection{Main results}
On compact Ricci flows with bounded curvature tending to a Ricci-flat metric at a fast enough rate (as formalized in the proceeding result), we derive a monotone dynamical $\lambda$-functional denoted $\lambda_{\mathrm{dyn}}^\infty$, akin to the pointed entropy functional introduced in \cite{hein-naber}, that serves as an upper bound for Perelman's $\lambda$-functional. This new functional further provides an alternative method of classifying Ricci breathers along such Ricci flows.

\begin{theorem} \label{main-thm-arf-cpct}
Suppose that $(M^n,g(t))$ is an ancient solution to the Ricci flow equation on a compact manifold for which the Lojasiewicz-Simon inequality for the $\lambda$-functional,
\begin{align} \label{eq:main-thm-1}
    ||\ric_{f_{g(t)}}||_{L^2(e^{-f_g(t)}dV_g(t))} \geq |\lambda[g(t)]|^{1-\theta},
\end{align}
holds for some $\theta \in [\frac{2}{5},\frac{1}{2}]$ for all $t\in (-\infty,T_0]$ for some $T_0<0$, where $f_g$ is the minimizer of $\lambda$. Suppose further that there exist constants $C_k,T_k>0$ independent of $t$ such that the inequalities
\begin{align} \label{eq:main-thm-2}
    |\nabla^k[g(t)-g_{\mathrm{RF}}]| \leq C_k|t|^{-\frac{\theta}{1-2\theta} }
\end{align}
hold for $k\in \{1,2,3\}$ for all $t\in (-\infty,T_k]$.

\begin{enumerate}[(a)]
    \item There exists a solution $f^\infty:M\times [0,\infty) \r \R$ to the conjugate heat flow such that the functional
    \begin{align}
    \lambda^\infty_{\mathrm{dyn}}(t):=\mathcal{F}[g(t),f^\infty(-t)],
    \end{align}
    which is defined on $(-\infty,0]$, is bounded from below by Perelman's $\lambda$-functional evaluated at $g(t)$.
    \item If there exists a pair of non-positive times $(t_1,t_2)$ such that $t_1<t_2$ and
    \begin{align}
        \lambda^\infty_{\mathrm{dyn}}(t_1) = \lambda^\infty_{\mathrm{dyn}}(t_2),
    \end{align}
    then $(g(t))_{t\in [t_1,t_2]}$ is a Ricci-flat, steady gradient Ricci soliton.
\end{enumerate}
\end{theorem}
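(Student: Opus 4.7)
The plan splits into two parts. For (a), I build $f^\infty$ as a limit of conjugate heat flows initialized by $\lambda$-minimizers at times approaching $-\infty$, using the decay hypotheses to extract convergence and canonicity. For (b), I apply Perelman's monotonicity for the $\mathcal{F}$-functional together with the classical rigidity of compact steady gradient Ricci solitons.

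\smallskip

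\textbf{Part (a).} For each $s > 0$, let $f_s$ denote the $\lambda$-minimizer of $g(-s)$, so $\int_M e^{-f_s} dV_{g(-s)} = 1$ and $\mathcal{F}[g(-s), f_s] = \lambda[g(-s)]$. Let $f^{(s)}$ be the conjugate heat flow whose value at the time slice $\{t = -s\}$ coincides with $f_s$, propagated along the background Ricci flow. The decay \eqref{eq:main-thm-2} pins $g(-s)$ to $g_{\mathrm{RF}}$ at a polynomial rate in $C^3$, so the conjugate heat propagator acts as a controlled perturbation of the propagator on $(M, g_{\mathrm{RF}})$; simultaneously, the Lojasiewicz-Simon inequality \eqref{eq:main-thm-1} forces $\lambda[g(-s)] \to 0$ at a compatible rate, and hence $f_s$ tends to the $\lambda$-minimizer on $g_{\mathrm{RF}}$ --- a constant, on a compact manifold. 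Standard parabolic Schauder estimates then give uniform $C^k_{\mathrm{loc}}$ bounds on $\{f^{(s)}\}$, and a diagonal Arzel\`a-Ascoli extraction produces a subsequential limit $f^\infty: M \times [0, \infty) \to \R$ solving the conjugate heat equation. Canonicity of $f^\infty$ --- its independence of the starting time --- reduces to a uniqueness statement for conjugate heat flows with prescribed asymptotic behavior at $\tau \to \infty$, which is precisely the type of assertion that the Lojasiewicz-Simon dissipation framework of \cite{Haslhofer-Muller} is designed to yield. Since conjugate heat preserves the normalization $\int_M e^{-f^\infty(-t)} dV_{g(t)} = 1$, the variational characterization
\[
\lambda[g(t)] = \inf \left\{ \mathcal{F}[g(t), f] \st \int_M e^{-f} dV_{g(t)} = 1 \right\}
\]
immediately gives $\lambda^\infty_{\mathrm{dyn}}(t) = \mathcal{F}[g(t), f^\infty(-t)] \geq \lambda[g(t)]$.

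\smallskip

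\textbf{Part (b).} By Perelman's monotonicity along the coupled Ricci flow / conjugate heat system,
\[
\frac{d}{dt} \mathcal{F}[g(t), f^\infty(-t)] = 2 \int_M \big|\Ric_{g(t)} + \Hess_{g(t)} f^\infty(-t)\big|^2 e^{-f^\infty(-t)} \, dV_{g(t)} \geq 0.
\]
If $\lambda^\infty_{\mathrm{dyn}}(t_1) = \lambda^\infty_{\mathrm{dyn}}(t_2)$ with $t_1 < t_2 \leq 0$, the right-hand side must vanish on $[t_1, t_2]$, forcing $\Ric_{g(t)} + \Hess_{g(t)} f^\infty(-t) \equiv 0$ there. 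This exhibits the restriction of $g(t)$ to $[t_1, t_2]$ as a steady gradient Ricci soliton with potential $f^\infty(-t)$, which on a compact manifold is forced to be Ricci-flat by the classical maximum-principle argument applied to the identity $R + |\nabla f|^2 = \mathrm{const}$ together with the soliton evolution $\Delta R + 2|\Ric|^2 = \nabla f \cdot \nabla R$.

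\smallskip

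\textbf{Main obstacle.} The substantive step is the construction and canonicity of $f^\infty$ in part (a). The exponent $\tfrac{\theta}{1 - 2\theta}$ in \eqref{eq:main-thm-2} is calibrated precisely so that the decay of $g(t) - g_{\mathrm{RF}}$ is strong enough, against the regularizing effect of the conjugate heat propagator, for the family $\{f^{(s)}\}$ to admit a unique limit; verifying this convergence and the uniqueness of the limit via Lojasiewicz-Simon is the technical heart of the argument. Once $f^\infty$ is in hand, both the lower bound in (a) and the rigidity in (b) reduce to standard arguments.
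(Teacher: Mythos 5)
Your high-level strategy coincides with the paper's: build $f^\infty$ as a limit of conjugate heat flows, use the preserved normalization $\int_M e^{-f^\infty}\,dV_g=1$ against the variational characterization of $\lambda$ for the lower bound in (a), and use Perelman's monotonicity for (b). Part (b) as you present it is correct; the paper reaches Ricci-flatness by a slightly different endgame (using $\scal\geq 0$ for ancient flows plus an integration by parts to show $\lambda^\infty_{\mathrm{dyn}}\equiv 0$, which forces $\nabla f^\infty\equiv 0$ and $\scal\equiv 0$), whereas you quote the classical rigidity of compact steady gradient solitons; both are legitimate.

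The genuine gap is the compactness step in (a). ``Standard parabolic Schauder estimates'' do not produce uniform $\C^k_{\mathrm{loc}}$ bounds on $\{f^{(s)}\}$ on their own: Schauder controls higher norms in terms of the sup norm, and the entire difficulty is that $\sup_M|f^{(s)}|$ and $\sup_M|\nabla f^{(s)}|$ must be bounded uniformly in $s$ over a backward time interval of \emph{unbounded} length. This is precisely where $\theta\geq\tfrac25$ enters: it forces $\beta=\tfrac{\theta}{1-2\theta}\geq 2$, and only after converting \eqref{eq:main-thm-2} into the curvature decay $|\ric|=O(|t|^{-\beta})$ and $|\nabla\scal|=O(|t|^{1-\beta})$ (the content of Section~2 of the paper) do these quantities become integrable in time, so that the maximum-principle comparisons for $\square f$ and $\square|\nabla f|^2$ close up with $s$-independent constants. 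Your sketch neither performs this conversion nor runs these comparisons, and your stated mechanism (the propagator being a ``controlled perturbation'' of the Ricci-flat one) is not what makes the estimates work. Two further points. First, initializing at the $\lambda$-minimizers $f_s$ rather than at $0$ creates an extra burden you do not discharge: the maximum principle needs uniform $C^1$ control on the initial data, so you must first prove uniform elliptic estimates for the minimizers as $s\to-\infty$. Second, passing to the limit in the nonlinear equation is delicate because of the $|\nabla f|^2$ term; the paper linearizes via $u^s=e^{-f^s}$, which solves $\partial_\tau u=\Delta u-\scal u$, takes the limit there, and uses the uniform two-sided bounds $e^{-c}\leq u^s\leq 1$ to set $f^\infty=-\log u^\infty$. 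Finally, the ``canonicity'' of $f^\infty$ you attribute to the Lojasiewicz--Simon framework is asserted, not proved (nor is it proved in the paper); the theorem only requires existence, so a subsequential limit suffices and that claim should be dropped rather than leaned on.
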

In \cite{Haslhofer-Muller}, it is proven that the inequality  \eqref{eq:main-thm-1} holds true with respect to a uniform exponent $\theta \in (0,\frac{1}{2}]$ on a sufficiently small $\C^{2,\alpha}$-neighborhood of any Ricci-flat metric In particular, any ancient solution to the Ricci flow converging to a Ricci-flat metric $g_{\mathrm{RF}}$ satisfies \eqref{eq:main-thm-1} on an arbitrarily large interval of the form $(-\infty,T]$ for some $T<0$. According to \cite{Haslhofer-perelman-lambda}, flows tending to integrable metrics (i.e. Ricci-flat metrics whose infinitesimal Ricci-flat deformations are integrable) such as Calabi-Yau and Kähler metrics satisfy \eqref{eq:main-thm-1} with the optimal exponent $\theta = \frac{1}{2}$. In fact, every known example of a compact ancient Ricci flow satisfies \eqref{eq:main-thm-1} and \eqref{eq:main-thm-2} with respect to the optimal exponent $\theta = \frac{1}{2}$, so our assumption that $\theta \geq \frac{2}{5}$ offers some modest leeway.

Aside from our main result, we conclude the paper by establishing local upper bounds for eigenvalues of the drift Laplacian along a Ricci flow coupled with a conjugate heat flow as well as its sharpness for the first eigenvalue. Such a result is motivated by recent work of Colding and Minicozzi, who derived sharp upper bounds for eigenvalues of the drift Laplacian along a modified Ricci flow \cite{colding-minicozzi}.

\begin{theorem}
 If $(g(t),f(t))$ is a coupled system of Riemannian metrics and smooth functions on a compact manifold $M^n $ satisfying
\begin{align*}
    \begin{cases}
        \partial_tg = -2\ric - 2\nabla^2f \\ \partial_tf = - \scal - \Delta f,
    \end{cases}
\end{align*}
 and $\lambda_k(t)$ denotes the $k$th eigenvalue of the drift Laplacian for $f$ on $(M^n,g(t), f(t))$ for some $k\geq 1$, then we have the local upper bound
    \begin{align*}
        \lambda_k(t) \leq \frac{\lambda_k(t_0)}{1-2(t-t_0)\lambda_k(t_0)} \quad \text{for} \text{ }\text{all} \text{ }  t \in \left[t_0,t_0+\frac{1}{2\lambda_k(t_0)}\right)
        \end{align*}
        for any $t_0$ in the interval of definition of the coupled system.
\end{theorem}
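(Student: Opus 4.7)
The plan is to apply the Courant-Fischer minimax characterization of the $k$th eigenvalue to a $k$-dimensional trial subspace built by propagating the eigenfunctions at $t_0$ forward in time by the drift heat flow, and then to reduce the resulting Rayleigh quotient bound to a positive semidefinite matrix ODE comparison.

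A preliminary but crucial observation is that the coupled system preserves the weighted volume $d\mu := e^{-f}\, dV_g$:
\begin{align*}
    \partial_t(e^{-f}\, dV_g) = \bigl[-\partial_t f + \tfrac{1}{2}\tr_g \partial_t g\bigr] e^{-f}\, dV_g = \bigl[(\scal + \Delta f) + (-\scal - \Delta f)\bigr] d\mu = 0,
\end{align*}
so the drift Laplacian $\Delta_f = \Delta - \nabla f \cdot \nabla$ remains self-adjoint on $L^2(d\mu)$ at every $t$. I then fix an $L^2(d\mu)$-orthonormal system $\phi_1, \dots, \phi_k$ of eigenfunctions of $-\Delta_{f(t_0)}$ at $t_0$ with eigenvalues $\lambda_1(t_0) \leq \dots \leq \lambda_k(t_0)$, propagate each by the forward drift heat equation $\partial_t \phi_i = \Delta_{f(t)} \phi_i$, and form the symmetric $k \times k$ matrices
\begin{align*}
    M(t)_{ij} := \int \phi_i \phi_j\, d\mu, \quad A(t)_{ij} := \int \langle \nabla \phi_i, \nabla \phi_j\rangle_{g(t)}\, d\mu, \quad H(t)_{ij} := \int \langle \nabla^2 \phi_i, \nabla^2 \phi_j \rangle_{g(t)}\, d\mu,
\end{align*}
which satisfy $M(t_0) = I$, $A(t_0) = \operatorname{diag}(\lambda_1(t_0), \dots, \lambda_k(t_0))$, and $H(t) \succeq 0$.

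A direct integration by parts against the preserved measure yields $M'(t) = -2A(t)$; computing $A'(t)$ uses $\partial_t g^{kl} = 2(\ric^{kl} + \nabla^k \nabla^l f)$ together with the polarized weighted Bochner identity
\begin{align*}
    \int \ric_f(\nabla u, \nabla v)\, d\mu = \int (\Delta_f u)(\Delta_f v)\, d\mu - \int \langle \nabla^2 u, \nabla^2 v\rangle\, d\mu,
\end{align*}
after which the $\ric_f$ and $(\Delta_f)^2$ contributions cancel and one is left with the clean identity $A'(t) = -2H(t) \preceq 0$. Setting $\Lambda_0 := \lambda_k(t_0)$ and
\begin{align*}
    K(t) := \Lambda_0 M(t) - \bigl(1 - 2(t-t_0)\Lambda_0\bigr) A(t),
\end{align*}
the combined ODEs give $K'(t) = 2\bigl(1 - 2(t-t_0)\Lambda_0\bigr) H(t) \succeq 0$ on $[t_0, t_0 + 1/(2\Lambda_0))$, while $K(t_0) = \Lambda_0 I - \operatorname{diag}(\lambda_i(t_0)) \succeq 0$. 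Hence $K(t) \succeq 0$ on the stated interval, which reads $A(t) \preceq \frac{\Lambda_0}{1 - 2(t-t_0)\Lambda_0} M(t)$.

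Finally, monotonicity $A(t) \preceq \Lambda_0 I$ gives $M(t) \succeq (1 - 2(t-t_0)\Lambda_0) I \succ 0$, so the trial subspace $V(t) := \operatorname{span}(\phi_1(t), \dots, \phi_k(t))$ is $k$-dimensional and Courant-Fischer delivers
\begin{align*}
    \lambda_k(t) \leq \max_{0 \neq a \in \R^k} \frac{\langle A(t) a, a\rangle}{\langle M(t) a, a\rangle} \leq \frac{\lambda_k(t_0)}{1 - 2(t-t_0)\lambda_k(t_0)}.
\end{align*}
The main technical hurdle I anticipate is the derivation of $A'(t) = -2H(t)$: the raw time derivative of $A$ produces a Ricci-Hessian term from $\partial_t g^{-1}$ together with two cross terms from $\partial_t \phi_i$ and $\partial_t \phi_j$, and it is precisely the weighted Bochner identity that forces the $\ric_f$ and $(\Delta_f)^2$ contributions to cancel, leaving only the nonnegative Hessian inner product. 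Once this identity is in hand, the rest of the argument is a short comparison of positive semidefinite ODEs.
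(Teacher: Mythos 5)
Your proof is correct and follows essentially the same route as the paper, which simply defers to Colding--Minicozzi's Section 2: propagate the time-$t_0$ eigenfunctions by the drift heat flow, use the monotonicity identities $\partial_t\langle u,v\rangle_f=-2\int\langle\nabla u,\nabla v\rangle e^{-f}dV_g$ and $\partial_t\|\nabla u\|^2_{L^2(e^{-f}dV_g)}=-2\int|\nabla^2u|^2e^{-f}dV_g$ (the paper's Lemma A.2.1 and Corollary A.2.2, whose polarized form is your $M'=-2A$, $A'=-2H$), and conclude by min--max. The one genuine difference is packaging: the paper derives the pointwise differential inequality $\lambda_k'\leq 2\lambda_k^2$ in the Dini/limsup sense of \cite[(1.1)]{colding-minicozzi} and integrates it, whereas your global matrix comparison $K(t)\succeq 0$ reaches the closed-form bound directly and sidesteps the non-differentiability of $t\mapsto\lambda_k(t)$ at eigenvalue crossings, which is a clean and self-contained way to finish.
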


\subsection{Organization of the article}
In Section \ref{curvature-estimates}, 
we establish some curvature estimates for ancient Ricci flows on compact manifolds satisfying \eqref{eq:main-thm-1} and \eqref{eq:main-thm-2} for some $\theta \in [\frac{2}{5},\frac{1}{2}]$ (which we shall refer to as asymptotically Ricci-flat, or ARF for short). In Section \ref{dyn-compact-ARF}, we use these estimates to define a dynamical energy functional for ARF flows. In Section \ref{eigenvalue-bounds}, by adapting the methods of \cite{colding-minicozzi} to the setting of Ricci flows coupled to conjugate heat flows, we prove a local upper bound for eigenvalues of the drift Laplacian for such a coupled system and show that it is sharp for the first eigenvalue. We sequester some of the lengthier calculations used throughout the paper in the appendix.

\subsection{Notation and conventions} We liberally use the following notational conventions throughout the paper.
\begin{itemize}
    \item $dV_g$ denotes the volume form computed with respect to a given Riemannian metric $g$.
    \item $g_e$ denotes the standard Euclidean metric, and $\nabla^e=\nabla^{g_e}$, $dV_e=dV_{g_e}$, etc.
    \item Given a function $f(x)$ and a non-negative function $g(x)$, we write $f=O(g)$ to mean that $|f(x)|\leq cg$ for some constant $c>0$ independent of $x$ for sufficiently large $x$. The variable in consideration will be clear from the context, but it will most often be time.
    \item Given two tensors $A$ and $B$, we denote by $A*B$ a tensor derived from the tensor product $A\otimes B$ in the sense of \cite[Section 2.1]{topping-ricci-flow}. In particular, $|A*B|=O(|A|\cdot |B|)$.
    \item Given a function $f\in \C^\infty(M)$, we denote by $\ric_f$ and $\Delta_f$ the weighted Ricci curvature tensor $\ric_f = \ric + \nabla^2f$ and the drift Laplacian $\Delta_f = \Delta - \nabla_{\nabla f}$.
\end{itemize}
We refer the reader to \cite[Chapter 2]{chow-ricci-flow-1} for all of the relevant evolution equations for geometric quantities along the Ricci flow, many of which we use freely throughout the paper.

\subsection{Acknowledgements} This paper is the outcome of an MIT PRIMES project, where R.S. was mentored by I.M.L. R.S. thanks the PRIMES program for making this opportunity possible. The authors also thank Tristan Ozuch for suggesting the project, and for continuously giving invaluable insights and advice.

\section{Curvature estimates for ARF Ricci flows} \label{curvature-estimates}

\subsection{Definitions} The following definition, which should be compared with the estimates in the hypothesis of Theorem \ref{main-thm-arf-cpct}, makes precise the class of Ricci flows we shall be interested in throughout the paper.

\begin{definition}[Asymptotically Ricci-flat flow]
    Let $(M^n,g(t))_{t\in (-\infty, S]}$ be an ancient Ricci flow on a compact Riemannian manifold $M^n$. We say that $g(t)$ is \emph{asymptotically Ricci-flat (ARF) of order $\theta \in (0,\frac{1}{2}]$ and regularity $m \in \mathbb{N}_0$} if there exists a Ricci-flat metric $g_{\mathrm{RF}}$ and constants $C_k,T_k>0$ for each $k \in \{1,...,m\}$ such that
    \begin{enumerate}[(a)]
        \item the Lojasiewicz-Simon inequality for the $\lambda$-functional,
    \begin{align} \label{eq:lojasiewicz-simon-lambda}
    ||\ric_{f_{g(t)}}||_{L^2(e^{-f_g(t)}dV_g(t))} \geq |\lambda[g(t)]|^{1-\theta},
    \end{align}
    holds for some $\theta \in [\frac{2}{5},\frac{1}{2}]$ for all $t\leq -T_0$, where $f_{g(t)}$ is the minimizer of $\lambda[g(t)]$;
    \item the inequality
    \begin{align} \label{eq:arf-derivative-decay-rates}
    |\nabla^{k,g(t)}[g(t)-g_{\mathrm{RF}}]|_{g(t)} \leq C_k|t|^{-\frac{\theta}{1-2\theta} }
    \end{align}
    holds for all $t\leq -T_k$.
    \end{enumerate}
    As in \cite{Haslhofer-perelman-lambda}, one can interpolate between higher order derivative estimates with respect to $g_{\mathrm{RF}}$ and $g(t)$.
\end{definition}

We shall often find it helpful to write the inequalities from the previous definition in terms of $\beta:=\frac{\theta}{1-2\theta}$. In particular, \eqref{eq:lojasiewicz-simon-lambda} implies that there exist constants $C_0,T_0>0$ such that
\begin{align}
    |g(t)-g_{\mathrm{RF}}| \leq C_0|t|^{-\beta}
\end{align}
for all $t \in (-\infty,0]$ satisfying $t \leq -T_0$. Consequently, if $g(t)$ is ARF of order $\theta$, then the $k$-th order variant of \eqref{eq:arf-derivative-decay-rates} holds true with the exponent $-\beta-k$ for each $k\in \{0,...,m\}$. In particular, $\beta \geq c$ if and only if $\theta \geq \frac{c}{2c+1}$, and the metric decays exponentially quickly to $g_{\mathrm{RF}}$ when $\beta=\infty$ or $\theta= \frac{1}{2}$. For simplicity of notation, we utilize the definition in terms of $\beta$ pervasively throughout the remainder of the paper and write $g(t) \in \mathcal{M}^m_\beta(g_{\mathrm{RF}})$ to mean that $g(t)$ is ARF of order $\frac{\theta}{1-2\theta}$ and regularity $m$.

\subsection{Ricci curvature estimates} We devote the remainder of this section to deriving Ricci curvature estimates for ARF flows that decay polynomially in time. We remark that if $g(t) \in \mathcal{M}^{m+2}_{\beta}(g_{\mathrm{RF}})$, then $|\nabla^k\mathrm{Rm}[g(t)]|_{g(t)}$ is uniformly bounded in $t$ for any $k\leq m$. In fact, if $(M, g_{\mathrm{RF}})$ is Riemann-flat, then $\lvert \mathrm{Rm}[g(t)]\rvert  = O(|t|^{-\beta})$; however, such precise estimates for the norm of the Riemann curvature tensor and its derivatives are frivolous for our purposes, so we will not explore them further.

We begin by establishing an asymptotic estimate for the norm of the Ricci curvature tensor. As we shall make evident momentarily, the proof of this estimate remains valid if we consider immortal flows in lieu of ancient ones.

\begin{lemma} \label{arf-ricci-decay}
    If $g(t) \in \mathcal{M}^2_\beta(g_{\mathrm{RF}})$, then $|\ric[g(t)]|_{g(t)}=O(|t|^{-\beta})$.
    \begin{proof}
        Given any $t\leq -T:=-\max_{k=0,1,2}(T_k)$, we define the linear function $\psi:[0,1] \to S^2 T^*M$ by
        \begin{align}
           \psi(s)=(1-s)g(t)+sg_{\mathrm{RF}}
        \end{align}
        so that $\psi(0)=g(t)$ and $\psi(1)=g_{\mathrm{RF}}$.
        Since $|g(t)-g_{\mathrm{RF}}|=O(|t|^{-\beta})$ and $\delta_{ij}$ is uniformly equivalent to $g_{\mathrm{RF}}$ by virtue of the compactness of $M$, there exist positive constants $c_1,c_2$ such that
        \begin{align}
            (1-c_1c_2^{-1})g_{\mathrm{RF}} \leq g_{\mathrm{RF}}- c_1\delta_{ij}\leq g(t) \leq g_{\mathrm{RF}} + c_1\delta_{ij} \leq (1+c_1c_2)g_{\mathrm{RF}}
        \end{align}
        for all $t\leq -T$. Setting $C=\max(|1-c_1c_2^{-1}|^{-1},|1+c_1c_2|)$, we obtain that $C^{-1}g_{\mathrm{RF}} \leq g(t) \leq Cg_{\mathrm{RF}}$, whence the metrics $(g(t))_{t\leq -T}$ are uniformly equivalent to $g_{\mathrm{RF}}$.  Consequently, we may invoke \cite[Lemma C.1]{deruelle-ozuch-2020} to obtain the upper bound
        \begin{align}
            |\partial_s\ric[\psi(s)]|_{{g(t)}} &\leq A[ (1+|h|_{g(t)})|\nabla^{g(t),2}h|_{g(t)} + |\mathrm{Rm}[g(t)]|_{g(t)}|h|_{g(t)} \\ & \text{ } \text{ } \text{ } \text{ } \text{ } \text{ } \text{ } \text{ }   + (1+|h|_{g(t)})|\nabla^{g(t)}h|^2_{g(t)}] \label{eq:do-linear-variation-ric}
        \end{align}
        for some constant $A$ independent of $s\in [0,1]$ and $t\in (-\infty,-T]$. Since $g(t)$ is assumed to be ARF of order $\beta$ with regularity $2$, we have that $|\nabla^{g(t),k}h|_{g(t)}=O(|t|^{-\beta})$ for $k\in \{0,1,2\}$, so \eqref{eq:do-linear-variation-ric} tells us that $|\partial_s\ric[\psi(s)]|_{g(t)}=O(|t|^{-\beta})$. Consequently, the fundamental theorem of calculus yields that 
        \begin{align}
             |\ric[g(t)]-\ric(g_{\mathrm{RF}})|_{g(t)} \leq  \int^1_0 |\partial_s\ric[\psi(s)]|_{g(t)}ds = O(|t|^{-\beta}).
        \end{align}
        Since $\ric(g_{\mathrm{RF}})=0$, it follows that $|\ric[g(t)]|_{g(t)}=O(|t|^{-\beta})$, as claimed. 
    \end{proof}
\end{lemma}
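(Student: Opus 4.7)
The plan is to interpolate linearly between $g(t)$ and $g_{\mathrm{RF}}$ and apply the fundamental theorem of calculus to the Ricci tensor along this path. Concretely, for $|t|$ sufficiently large so that the estimates in the definition of $\mathcal{M}^2_\beta(g_{\mathrm{RF}})$ are in force, I would set $\psi(s) := (1-s)g(t) + s g_{\mathrm{RF}}$ and $h := g_{\mathrm{RF}} - g(t)$. Since $\ric[\psi(1)] = \ric(g_{\mathrm{RF}}) = 0$, the desired estimate reduces to bounding $\int_0^1 |\partial_s \ric[\psi(s)]|_{g(t)}\, ds$, and the linearization of Ricci in the direction $h$ is schematically of the form $\nabla^2 h + \Rm \ast h + \nabla h \ast \nabla h$.

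Before invoking such a linearization estimate I need uniform comparability of the one-parameter family $\psi(s)$. The compactness of $M$ means that $g_{\mathrm{RF}}$ is uniformly equivalent to any fixed background metric, and the ARF decay $|g(t) - g_{\mathrm{RF}}| = O(|t|^{-\beta})$ then forces $C^{-1} g_{\mathrm{RF}} \leq g(t) \leq C g_{\mathrm{RF}}$ for all $t$ with $|t|$ large, hence also $C^{-1} g_{\mathrm{RF}} \leq \psi(s) \leq C g_{\mathrm{RF}}$ uniformly in $s \in [0,1]$. With this in hand, I would cite a standard variation-of-Ricci inequality, such as \cite[Lemma C.1]{deruelle-ozuch-2020}, to obtain
\begin{align*}
|\partial_s \ric[\psi(s)]|_{g(t)} \leq A\bigl[(1+|h|_{g(t)})\,|\nabla^{g(t),2} h|_{g(t)} + |\Rm[g(t)]|_{g(t)}\,|h|_{g(t)} + (1+|h|_{g(t)})\,|\nabla^{g(t)} h|^2_{g(t)}\bigr]
\end{align*}
with constant $A$ independent of $s$ and $t$.

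The ARF assumption of regularity $2$ supplies $|\nabla^{g(t),k} h|_{g(t)} = O(|t|^{-\beta})$ for $k = 0,1,2$, and the earlier remark that $|\Rm[g(t)]|_{g(t)}$ is uniformly bounded in $t$ for flows in $\mathcal{M}^{m+2}_\beta(g_{\mathrm{RF}})$ (applied with $m = 0$, or directly via compactness and the uniform equivalence just established) controls the middle term. Substituting these estimates into the displayed inequality yields $|\partial_s \ric[\psi(s)]|_{g(t)} = O(|t|^{-\beta})$ uniformly in $s$, and integrating over $[0,1]$ completes the proof. The main subtlety, and the step I expect to be the principal obstacle, is ensuring that the constants appearing in the linearization estimate are uniform both in $s \in [0,1]$ and in all sufficiently negative $t$; this is precisely what the uniform metric equivalence step buys, without which the quadratic $|\nabla h|^2$ term could only be controlled with a time-dependent coefficient.
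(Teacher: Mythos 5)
Your proposal is correct and follows essentially the same route as the paper's proof: the same linear interpolation $\psi(s)$, the same uniform metric equivalence argument via compactness, the same citation of the variation-of-Ricci estimate, and the same integration via the fundamental theorem of calculus. Your explicit remark that the uniform bound on $|\Rm[g(t)]|$ controls the middle term is a point the paper leaves implicit, but the argument is identical in substance.
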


We continue by using the maximum principle to establish decay rates for the derivatives of the Ricci curvature. In the sequel, we frequently utilize cutoff functions detecting the time decay of the Ricci curvature beyond a fixed time. More precisely, if $|\mathrm{Rm}[g(t)]|_{g(t)} \leq c_0$ on $M\times [0,\infty)$ and $|\ric[g(t)]|_{g(t)} \leq c_1t^{-\beta}$ for $t\geq T$, we define the function $\psi:[0,\infty) \r [0,\infty)$ by
\begin{align} \label{eq:curvature-time-cutoff}
    \psi(t)=
    \begin{cases}
        n(n-1)c_0 & t\leq T \\ \chi n(n-1)c_0 + (1-\chi)c_1t^{-\beta} & T \leq t \leq T+1 \\ c_1t^{-\beta} & t\geq T+1,
    \end{cases}
\end{align}
where $\chi$ is a non-negative, smooth cutoff function such that $\chi \equiv 1$ on $[0,T+\frac{1}{2}]$ and $\chi \equiv 0$ on $[T+1,\infty)$. By definition, $\psi$ is uniformly bounded and $|\ric[g(t)]|_{g(t)}\leq \psi(t)$ for all $t\geq 0$. Moreover, if we assume that $\beta > 1$, then $I(t):=\int^t_0 \psi(u)du$ is uniformly bounded. We shall use similar constructions in other maximum principle arguments without explicitly defining the appropriate time cutoff functions. 

Before establishing some decay for $|\nabla \ric|$, we demonstrate the utility of the cutoff function $\psi$ by proving the following simpler result pertaining to the volume evolving along an ARF flow, which will prove invaluable in its own right in the next section.

\begin{lemma} \label{volume-upper-bound}
    If $g(t) \in \mathcal{M}^2_\beta(g_{\mathrm{RF}})$ for some $\beta > 1$, then there exists a positive constant $C$ independent of $t$ such that $V(t):=\mathrm{Vol}[g(t)] \leq C$.
    \begin{proof}
        Set $\tau = S-t$ so that $\partial_\tau = -\partial_t$. After integrating the variational formula $\partial_\tau(dV)=\scal dV$ over $M$, Lemma \ref{arf-ricci-decay}
       tells us that $\partial_\tau V\leq \psi V$. Consequently, the maximum principle tells us that $V(\tau) \leq V(0)\exp\left(\int^\tau_0 \psi(u)du\right)=V(0)e^{I(\tau)}$, hence the desired result follows from the observation that $I(\tau)$ is bounded provided that $\beta>1$.
    \end{proof}
\end{lemma}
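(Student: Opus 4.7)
The plan is to reparametrize time and bound the volume via a Gr\"onwall-type argument driven by the Ricci decay from Lemma \ref{arf-ricci-decay}. First I would set $\tau = S-t$ so that $\partial_\tau = -\partial_t$ and the flow is reindexed on $[0,\infty)$. Recall the standard volume form evolution along Ricci flow: since $\partial_t(dV_{g(t)}) = -\scal_{g(t)}\, dV_{g(t)}$, one has
\begin{align}
    \partial_\tau(dV_{g(\tau)}) = \scal_{g(\tau)}\, dV_{g(\tau)}.
\end{align}
Integrating over $M$ gives $\partial_\tau V(\tau) = \int_M \scal_{g(\tau)}\, dV_{g(\tau)}$.

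Next I would use the pointwise bound $|\scal_{g(\tau)}| \leq \sqrt{n}\,|\ric[g(\tau)]|_{g(\tau)}$ combined with Lemma \ref{arf-ricci-decay}, which asserts $|\ric[g(\tau)]|_{g(\tau)} = O(\tau^{-\beta})$ as $\tau \to \infty$, together with the global bound $|\ric| \leq (n-1)c_0$ implied by the bounded curvature hypothesis on any compact initial interval $[0,T]$. Exactly this is the situation that motivated the construction of $\psi$ in \eqref{eq:curvature-time-cutoff}: after possibly enlarging the constants, we get $|\scal_{g(\tau)}| \leq \psi(\tau)$ uniformly on $M\times [0,\infty)$, so that
\begin{align}
    \partial_\tau V(\tau) \leq \psi(\tau) V(\tau).
\end{align}
Applying the maximum principle (equivalently, integrating the ODE differential inequality) yields
\begin{align}
    V(\tau) \leq V(0)\exp\!\left(\int_0^\tau \psi(u)\,du\right) = V(0)\, e^{I(\tau)}.
\end{align}

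The final step is to show $I(\tau)$ is uniformly bounded in $\tau$. Splitting $I(\tau)$ into the integrals over $[0,T+1]$ and $[T+1,\tau]$, the first piece is bounded because $\psi$ is uniformly bounded, and the second piece equals $\int_{T+1}^\tau c_1 u^{-\beta}\,du$, which is finite as $\tau\to\infty$ precisely because $\beta>1$. Therefore $\sup_{\tau\geq 0} I(\tau) < \infty$, and setting $C := V(0)\,e^{\sup_\tau I(\tau)}$ furnishes the desired uniform upper bound. The only delicate point, which is exactly where the hypothesis $\beta>1$ enters, is the integrability of $u^{-\beta}$ at infinity; with slower Ricci decay this argument would break down, and indeed the volume could grow logarithmically or faster.
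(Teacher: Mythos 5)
Your proposal is correct and follows essentially the same route as the paper's proof: reparametrize by $\tau = S-t$, integrate the volume-form evolution, bound the scalar curvature by the cutoff function $\psi$ from \eqref{eq:curvature-time-cutoff} via Lemma \ref{arf-ricci-decay}, apply the Gr\"onwall/maximum principle, and use $\beta>1$ for the integrability of $\psi$ at infinity. You merely spell out the intermediate steps (the $|\scal|\leq \sqrt{n}\,|\ric|$ bound and the splitting of $I(\tau)$) that the paper leaves implicit.
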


The following result provides us with an asymptotic estimate for some of the derivatives of the Ricci curvature. The number of times we can differentiate while preserving some time decay is dictated by how much regularity we assume the ARF flow to have.
\begin{lemma} \label{arf-ricci-derivatives-decay}
    If $g(t) \in \mathcal{M}^{m+2}_\beta(g_{\mathrm{RF}})$ and $m\leq \beta$, then $|\nabla^k \ric[g(t)]| = O(|t|^{k-\beta})$ for any $k\leq m$. 
    \begin{proof}
        Lemma \ref{arf-ricci-decay} tells us that the desired estimate holds true when $m=0$. Let us now suppose that the estimate holds for all $j\in \{0,...,m-1\}$ for some $m\geq 1$. In particular, if $g(t) \in \mathcal{M}^{m+2}_\beta(g_{\mathrm{RF}})$, then for every $j\leq m-1$, there exist constants $C_{j}>0$ and $T_j\gg 1$ such that $|\nabla^{j}\ric|^2 \leq C_{j}|t|^{2j-2\beta-2}$ for all $t\leq -T_j$. We define $T_m$ to be the maximum of the quantities $T_j$ for $j\leq m-1$ and show that there exists a constant $C_m$ such that $|\nabla^m \ric|^2 \leq C_m |t|^{2m-2\beta-2}$ for all $t< -T_m$.
          
        Let us fix an arbitrary negative time $s < -T_m$ and consider the subsolution defined on $[3s,0]$. Emulating our construction of the cutoff function \eqref{eq:curvature-time-cutoff}, we can whip up a bounded, non-negative cutoff function $\xi_s:[3s,0] \r [0,|2s|^{2\beta-2m+1}]$ vanishing on $[3s,5s/2]$ such that $\xi_s(t)=|t|^{2\beta - 2m+1}$ for all $t \in [2s,0]$. We may further assume that $|\partial_t\xi|\leq c|s|^{2\beta-2m}$ for some constant $c$ depending only on $\beta$ and $m$. We define the function $F_s:M\times [3s,0] \r [0,\infty)$ by the formula
        \begin{align} \label{eq:ricci-decay-test-function}
            F_s(x,t) = \xi_s(t)[|\nabla^m \ric|^2(x,t) +A|\nabla^{m-1}\ric|^2(x,t)],
        \end{align}
        where $A$ is a constant to be determined momentarily. Since $M$ is compact and $\xi_s$ is defined independently of $s$ on $[-T_m,0]$, $\square F_s$ is bounded from above on $M\times [-T_m,0]$ by a constant independent of $s$. 
        
        The evolution equation \eqref{eq:heat-eqn-ricci-derivates-norm-sqaured} permits us to estimate $\square |\nabla^j \ric|^2$ for $j \leq m$ and $t\in [2s,-T_m]$ as follows. We extract a quadratic $\nabla^j\ric$ term from the $i=0$ and $i=j$ summands. To tend to the remaining summands, we bound the derivatives of the Riemann curvature tensor from above by a uniform constant and estimate $|\nabla^{j-i}\ric|$ using the inductive hypothesis. In particular, we obtain linear $\nabla^j\ric$ terms with coefficients of order $O(|t|^{i-\beta-1})$, each of which can be bounded from above by a coefficient of order $O(|t|^{j-\beta-1})$. We deduce from these observations that
        \begin{align}
            \square|\nabla^j \ric|^2 \leq -2|\nabla^{j+1}\ric|^2 + B_{1,j}|\nabla^j \ric|^2 +B_{2,j}|t|^{j-\beta-1}|\nabla^j\ric|. 
        \end{align}
        We caution the reader that the constants $B_i$ may vary from line to line. Substituting $j=m-1$ and $j=m$ variants of this estimate into the formula for the action of the heat operator on $F$, we obtain that
        \begin{align}
            \square F_s\leq [&(B_1-2A)\xi + (\partial_t\xi)] \cdot |\nabla^m \ric|^2 + B_2\xi |t|^{m-\beta-1}|\nabla^m\ric| \\ &+[B_3A\xi + A(\partial_t\xi)] \cdot |\nabla^{m-1}\ric|^2 + B_4A\xi |t|^{m-\beta-2}|\nabla^{m-1}\ric|.
        \end{align}
        Recalling that $\xi = |t|^{2\beta-2m+1}$ on $[2s,-T_m]$, we can write this estimate explicitly as
        \begin{align}
            \square F_s &\leq [(B_1-2A)|t|^{2\beta -2m+1}-(2\beta-2m+1)|t|^{2\beta-2m}] \ |\nabla^m \ric|^2 + B_2|t|^{\beta - m}  |\nabla^m \ric|  \\ &\text{ } \text{ } \text{ } \text{ } + [B_3A|t|^{2\beta-2m+1} - A(2\beta-2m+1)|t|^{2\beta-2m}] |\nabla^{m-1}\ric|^2 + B_4A|t|^{\beta-m-1} |\nabla^{m-1}\ric|.
        \end{align}
        Since $m\leq \beta$, we can jettison some of the terms and apply Young's inequality twice to further simplify this upper bound to
        \begin{align}
            \square F_s \leq 2(B_1-2A) |t|^{2\beta-2m+1}  |\nabla^m\ric|^2 + 2AB_2 |t|^{2\beta-2m+1} |\nabla^{m-1}\ric|^2.
        \end{align}
        Consequently, if we set $A = B_1$, 
        then the first term becomes non-positive. Furthermore, by induction, $|\nabla^{m-1}\ric|^2 \leq C_{m-1}|t|^{2m-2\beta-2}$ for all $t\leq -T_{m}$, making the second term, and therefore $\square F_s$, uniformly bounded by a constant independent of $s$ on $M\times [2s,-T_m]$. Using the presumed uniform bounds for $\xi_s$ and $\partial_t\xi_s$, by enlarging $A$ if necessary, analogous computations allow one to draw the same conclusion on $M\times [3s,2s]$. The maximum principle therefore implies that
        \begin{align}
            \xi_s |\nabla^m \ric|^2 \leq F_s \leq C_m(t-3s)
        \end{align}
        for all $t\in [3s,0]$, where $C_m$ depends on $\max_{M\times [-T_m,0]}|\nabla^m \ric|^2$, the constants $C_j$ for each $j\leq m-1$, and the uniform bounds for the norms of the Riemann curvature and its derivatives. In particular, $C_m$ is independent of $s$.
        Since $\xi_s(s) = |s|^{2\beta-2m+1}$, it follows that $|\nabla^m \ric|^2(s) \leq 2C_m|s|^{2m-2\beta}$. Since the choice of $s< -T_m$ is arbitrary, the desired result follows.
    \end{proof}
\end{lemma}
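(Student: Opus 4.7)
The plan is to argue by induction on the order $k$ of differentiation. The base case $k=0$ is exactly Lemma \ref{arf-ricci-decay}. For the inductive step, I assume the bound $|\nabla^j\ric|^2 \leq C_j|t|^{2j-2\beta}$ for every $j \leq m-1$ on some half-line $(-\infty,-T_{m-1}]$ and seek the analogous bound with $j=m$ on a (possibly smaller) half-line $(-\infty,-T_m]$. The key input is the standard evolution equation
\begin{align*}
\square|\nabla^j\ric|^2 = -2|\nabla^{j+1}\ric|^2 + \sum_{i=0}^{j}\nabla^i\mathrm{Rm} * \nabla^{j-i}\ric * \nabla^j\ric,
\end{align*}
together with the uniform-in-$t$ bounds on $|\nabla^i\mathrm{Rm}|$ for $i\leq m$, which follow from $g(t)\in\mathcal M^{m+2}_\beta(g_{\mathrm{RF}})$ and the compactness of $M$.

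The technical heart of the argument is a weighted parabolic maximum principle. I would introduce a test function
\begin{align*}
F = \xi(t)\bigl[|\nabla^m\ric|^2 + A\,|\nabla^{m-1}\ric|^2\bigr],
\end{align*}
where $A>0$ is a constant to be determined and $\xi(t)\sim|t|^{2\beta-2m+1}$ for large $|t|$. Expanding $\square F$ via the evolution equation above, I would isolate the $i=0$ and $i=m$ terms (which couple $\nabla^m\ric$ quadratically to itself, producing a Grönwall-type coefficient) and use the inductive hypothesis, combined with uniform bounds on $\nabla^i\mathrm{Rm}$, to control the remaining cross terms by expressions of order $|t|^{m-\beta-1}|\nabla^m\ric|$. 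Young's inequality then absorbs these linear contributions into a small multiple of the quadratic $\xi|\nabla^m\ric|^2$ term. Taking $A$ large enough renders the net coefficient of $\xi|\nabla^m\ric|^2$ non-positive, leaving only an $A\xi|\nabla^{m-1}\ric|^2$ term, which by the inductive hypothesis is $O(|t|^{-1})$ and hence uniformly bounded.

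Because the flow is ancient, the maximum principle cannot be anchored at $t=-\infty$. My workaround, for each target time $s\ll -T_m$, is to work on the finite window $[3s,0]$ and use a smooth cutoff $\xi_s$ that vanishes identically near $t=3s$ and agrees with $|t|^{2\beta-2m+1}$ on $[2s,0]$, with $|\partial_t\xi_s|$ uniformly controlled by $|s|^{2\beta-2m}$. Then $F_s(\cdot,3s)\equiv 0$, and since $\square F_s$ is uniformly bounded by a constant $C$ independent of $s$, the maximum principle delivers $F_s(\cdot,t)\leq C(t-3s)\leq 2C|s|$ for $t\in[3s,0]$. Evaluating at $t=s$ and dividing by $\xi_s(s)=|s|^{2\beta-2m+1}$ yields $|\nabla^m\ric|^2(\cdot,s)\leq 2C|s|^{2m-2\beta}$, which is the desired estimate since $s<-T_m$ is arbitrary.

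The main obstacle I anticipate is coordinating the time cutoff $\xi_s$ so that all terms of the form $(\partial_t\xi_s)|\nabla^j\ric|^2$ in $\square F_s$ remain uniformly controlled in $s$, especially on the transition region near $t=3s$ where $\xi_s$ is still ramping up and the curvature quantities are not yet supplied with decay. A secondary concern is the borderline regime $m=\beta$, where the inductive hypothesis gives only boundedness (not decay) of $|\nabla^{m-1}\ric|$, so $A\xi_s|\nabla^{m-1}\ric|^2$ grows linearly in $|t|$ on $[2s,0]$; one must verify that this still fits into the $\square F_s\leq C$ budget and does not force $C$ to depend on $s$.
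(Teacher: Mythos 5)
Your proposal is correct and follows essentially the same route as the paper's proof: the same induction on the derivative order, the same weighted test function $\xi_s(t)\bigl[|\nabla^m\ric|^2 + A|\nabla^{m-1}\ric|^2\bigr]$ on the window $[3s,0]$ with a cutoff vanishing near $t=3s$ and agreeing with $|t|^{2\beta-2m+1}$ on $[2s,0]$, and the same maximum-principle conclusion $F_s\leq C(t-3s)$ evaluated at $t=s$. The two obstacles you flag (uniform control of $(\partial_t\xi_s)$-terms on the ramp-up region and the borderline $m=\beta$ case) are resolved in the paper exactly as you suggest, via the uniform bound $|\partial_t\xi_s|\leq c|s|^{2\beta-2m}$ and the observation that $\xi_s|\nabla^{m-1}\ric|^2=O(|t|^{-1})$.
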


\section{A dynamical energy functional on ancient ARF Ricci flows} \label{dyn-compact-ARF}
\subsection{Estimates for conjugate heat flows}\label{backward-heat-flow-estimates} Suppose that $(M^n,g(t))_{t\in (-\infty,T]}$ is an ancient solution to the Ricci flow equation on a compact manifold $M^n$. We also assume that $g(t) \in \mathcal{M}^3_\beta(g_{\mathrm{RF}})$ for some $\beta \geq 2$. Given a fixed time $s$, we denote by $f^s$ the solution to the conjugate heat flow
\begin{align} \label{eq:backward-heat-flow-t}
    \partial_tf^s = -\Delta f^s + |\nabla f^s|^2 - \scal
\end{align}
with initial datum $f^s(\cdot,s)=0$. While there is nothing special about this initial condition, it streamlines the proofs of the forthcoming estimates,
all of which apply to any solution to \eqref{eq:backward-heat-flow-t} with initial datum $f^s(\cdot,s)=u$ for any smooth function $u$ on $M$ with bounded derivatives with respect to $g(s)$ (or equivalently, any other metric along the flow).

We reparameterize time by setting $\tau(t)=s-t$ so that $\partial_\tau=-\partial_t$. With respect to $\tau$, the conjugate heat flow equation becomes
\begin{align} \label{eq:backward-heat-flow-tau}
    \partial_\tau f^s = \Delta f^s - |\nabla f^s|^2 + \scal,
\end{align}
which is an equation readily lending itself to maximum principle applications. We henceforth denote by $\square$ the heat operator $\square = \partial_\tau - \Delta$ with respect to $\tau$ and conflate $f^s$ with its reparameterization with respect to $\tau$.

Our goal in this subsection is to establish uniform upper bounds independent of $s$ for the norms of $f^s$, its gradient, and its Laplacian. Such bounds will prove paramount in our construction of a dynamical energy functional defined in terms of the aforementioned one-parameter family of conjugate heat flows. Let us begin with the following elementary upper bound for $|f^s|$. We caution the reader that we suppress the $s$-superscripts pervasively throughout the forthcoming proofs.

\begin{lemma} \label{function-arf-bound}
    There exists a positive dimensional constant $a(n)>0$ such that $0 \leq f^s \leq a(n)$ for all $s$.
    \begin{proof}
        Since ancient flows have non-negative scalar curvature, the non-negativity of $f^s$ may be deduced from the computations presented in the proof of \cite[Lemma 5.4]{lopez-ozuch}.
        
        Let us turn our attention to the stated upper bound. Since $|\scal|\leq \sqrt{n} \cdot |\ric|$,  \eqref{eq:backward-heat-flow-tau} implies that
        \begin{align}
            \square f^s(x,\tau) &= -|\nabla f^s|^2 + \scal \leq  \sqrt{n} \cdot \psi(\tau)
        \end{align}
        for any $x\in M$, where the function $\psi$ is defined as in \eqref{eq:curvature-time-cutoff}. The solution $\phi$ to the ODE $\partial_\tau \phi=\sqrt{n} \cdot \psi$ with initial condition $\phi(0)=0$ satisfies
        \begin{align}
            \phi(\tau) = \sqrt{n}\int^\tau_0 \psi(u)du \leq (T+1)\left(n^{\frac{3}{2}}(n-1)c_0+\frac{c_1\sqrt{n}}{(1-\beta)(T+1)^\beta}\right)=:a(n)
        \end{align}
        for any $\tau \geq 0$. Consequently, since $f^s(0)=0$, the maximum principle tells us that
        \begin{align}
            f^s(x,\tau) \leq \phi(\tau) \leq a(n)
        \end{align}
        for any $(x,\tau) \in M\times [0,\infty)$, as desired.
    \end{proof} 
\end{lemma}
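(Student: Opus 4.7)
The statement asserts two uniform bounds on $f^s$. For the lower bound, the plan is to exploit the fact that ancient Ricci flows on compact manifolds have non-negative scalar curvature (a classical consequence of Hamilton's maximum principle applied to $\scal$). The equation
\begin{align*}
\square f^s + |\nabla f^s|^2 = \scal \geq 0
\end{align*}
combined with $f^s(\cdot, 0) = 0$ is then amenable to the weak maximum principle: at any spatial minimum one has $\Delta f^s \geq 0$ and $\nabla f^s = 0$, so $\partial_\tau f^s \geq 0$ there, forcing $f^s \geq 0$ globally. (As the proof notes, this is essentially \cite[Lemma 5.4]{lopez-ozuch}.)

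The upper bound is the substantive part. The plan is to compare $f^s$ against a time-dependent, spatially constant supersolution. From \eqref{eq:backward-heat-flow-tau} one has
\begin{align*}
\square f^s = -|\nabla f^s|^2 + \scal \leq \scal \leq \sqrt{n}\,|\ric|.
\end{align*}
Lemma \ref{arf-ricci-decay} furnishes $|\ric[g(\tau)]| \leq \psi(\tau)$, where $\psi$ is the cutoff from \eqref{eq:curvature-time-cutoff}: constantly $n(n-1)c_0$ on $[0,T]$ and equal to $c_1 \tau^{-\beta}$ on $[T+1,\infty)$, with a smooth interpolation in between. Crucially, $\psi$ is defined purely in terms of the ambient ARF geometry and is independent of the initial time $s$.

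Next, I would set $\phi(\tau) = \sqrt{n} \int_0^\tau \psi(u)\,du$, so that $\phi(0) = 0$ and $\square \phi = \sqrt{n}\,\psi(\tau) \geq \square f^s$. Since $f^s - \phi$ vanishes at $\tau = 0$ and satisfies $\square(f^s - \phi) \leq 0$, the maximum principle yields $f^s \leq \phi$ pointwise. Finally, the hypothesis $\beta \geq 2 > 1$ guarantees that $\int_{T+1}^\infty c_1 u^{-\beta}\,du$ converges, so $\phi(\tau)$ is bounded for all $\tau \geq 0$ by the explicit quantity
\begin{align*}
a(n) := (T+1)\!\left( n^{3/2}(n-1)c_0 + \frac{c_1 \sqrt{n}}{(\beta-1)(T+1)^{\beta-1}} \right),
\end{align*}
giving the desired uniform upper bound $f^s \leq a(n)$.

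The only subtlety worth flagging is uniformity in $s$: this is automatic because the comparison function $\phi$ depends solely on the Ricci curvature bound $\psi$ along the ARF flow, not on the basepoint $s$ where the initial condition $f^s(\cdot, s) = 0$ is imposed. The argument is otherwise a routine application of the scalar maximum principle, and the main ingredient drawn from earlier work is the decay $|\ric[g(t)]| = O(|t|^{-\beta})$ together with the requirement $\beta > 1$ for integrability.
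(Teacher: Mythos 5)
Your proposal is correct and follows essentially the same route as the paper: non-negativity via the scalar maximum principle using $\scal \geq 0$ for ancient flows, and the upper bound by comparing $f^s$ against $\phi(\tau) = \sqrt{n}\int_0^\tau \psi(u)\,du$, with boundedness of $\phi$ coming from the integrability of $\psi$ when $\beta > 1$. Your version of the constant $a(n)$, with $(\beta-1)(T+1)^{\beta-1}$ in the denominator, is in fact the correctly signed evaluation of the tail integral, and your remark on uniformity in $s$ makes explicit a point the paper leaves implicit.
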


In a similar vein, we use cutoff functions detecting the decay rates of the first derivatives of curvature terms to establish a uniform upper bound for $|\nabla f^s|$.

\begin{lemma} \label{gradient-compact-decay}
    There exists a positive dimensional constant $b(n)>0$ such that $|\nabla f^s| \leq b(n)$ for all $s$.
    \begin{proof}
        Using the identity $\partial_\tau(\nabla f)=\nabla(\partial_\tau f)+\nabla f*\ric$ and the fact that $\partial_\tau g = 2\ric$, we compute that
        \begin{align}
             \partial_\tau|\nabla f|^2 &= -2\ric(\nabla f,\nabla f)+2\langle \nabla(\partial_\tau f),\nabla f \rangle + (\nabla f)^{*2}*\ric.
        \end{align}
        Substituting \eqref{eq:backward-heat-flow-tau} and absorbing the $-2\ric(\nabla f,\nabla f)$ term into the $(\nabla f)^{*2}*\ric$ term, we simplify this evolution equation to
        \begin{align} \label{eq:gradient-compact-decay-bound-0}
            \partial_\tau|\nabla f|^2 = 2\langle \nabla(\Delta f - |\nabla f|^2+\scal),\nabla f \rangle + (\nabla f)^{*2}*\ric.
        \end{align}
        Furthermore, by the Bochner formula, we have that
        \begin{align} \label{eq:gradient-compact-decay-bound-0-bochner}
            \Delta|\nabla f|^2 = 2\langle \nabla \Delta f,\nabla f \rangle + (\nabla f)^{*2}*\ric +2|\nabla^2f|^2.
        \end{align}
        Subtracting \eqref{eq:gradient-compact-decay-bound-0-bochner} from  \eqref{eq:gradient-compact-decay-bound-0} yields that
        \begin{align} 
            \square |\nabla f|^2 = 2\langle \nabla \scal,\nabla f \rangle - 2|\nabla^2f|^2 + (\nabla f)^{*2}*\ric -2\langle \nabla|\nabla f|^2,\nabla f \rangle. \label{eq:gradient-compact-decay-bound-2}
        \end{align}
        By Lemmas ~\ref{arf-ricci-decay} and ~\ref{arf-ricci-derivatives-decay}, by emulating the construction of the function defined by \eqref{eq:curvature-time-cutoff}, we may construct cutoff functions $\psi_1,\psi_2:[0,\infty) \r [0,\infty)$ such that 
        \begin{align}
            |\ric|(x,\tau) \leq \psi_1(\tau) = O(\tau^{-\beta}) \quad \mathrm{and} \quad |\nabla \scal|(x,\tau) \leq \psi_2(\tau) = O(\tau^{1-\beta}).
        \end{align}
        Consequently, there is a positive constant $C_1$ such that
        \begin{align} \label{eq:gradient-compact-decay-bound-1}
            \square |\nabla f|^2 &\leq C_1[\psi_1(\tau)|\nabla f|^2+\psi_2(\tau)|\nabla f|] - 2\langle \nabla|\nabla f|^2,\nabla f \rangle.
        \end{align}
        Furthermore, an application of Young's inequality yields that
        \begin{align}
            2\psi_2(\tau)|\nabla f| = 2\psi_2(\tau)^{\frac{1}{2}}\psi_2(\tau)^{\frac{1}{2}}|\nabla f| \leq \psi_2(\tau) + \psi_2(\tau)|\nabla f|^2,
        \end{align}
        allowing us to absorb the $|\nabla f|$ term into the $|\nabla f|^2$ term in \eqref{eq:gradient-compact-decay-bound-1}. In particular, defining $\psi_3=\psi_1+\psi_2=O(\tau^{1-\beta})$, we obtain that
        \begin{align} \label{eq:gradient-compact-decay-bound-3}
            \square |\nabla f|^2 \leq \psi_3(\tau)(|\nabla f|^2+1)-2\langle \nabla|\nabla f|^2,\nabla f \rangle.
        \end{align}
        Since $\beta \geq 2$, the solution $\phi$ to the ODE $\partial_\tau \phi = \psi_3(\phi+1)$ with initial condition $\phi(0)=0$ satisfies
        \begin{align}
            \phi(\tau) &= \exp\left(\int^\tau_0 \psi_3(u)du \right)-1 \leq \exp\left(C_2(T+1)^{2-\beta}\right)=:b(n)^2
        \end{align}
        for all $\tau \geq 0$, where $C_2$ is a dimensional constant. Since $|\nabla f|(0)=0$, the maximum principle therefore implies that $|\nabla f|^2\leq b(n)^2$, as desired.

    \end{proof}
\end{lemma}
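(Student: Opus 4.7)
The plan is to apply a parabolic maximum principle to $|\nabla f^s|^2$ with respect to the reparameterized time $\tau = s - t$. First I would derive the evolution equation for $|\nabla f|^2$ under $\square = \partial_\tau - \Delta$ by combining the identity $\partial_\tau(\nabla f) = \nabla(\partial_\tau f) + \nabla f * \ric$ with the conjugate heat flow equation \eqref{eq:backward-heat-flow-tau} and the Bochner formula. After cancelling the two instances of $2\langle \nabla \Delta f, \nabla f\rangle$ that appear on either side, the evolution equation should take the schematic form
\[
\square |\nabla f|^2 = 2\langle \nabla \scal, \nabla f\rangle - 2|\nabla^2 f|^2 + \ric * \nabla f * \nabla f - 2\langle \nabla|\nabla f|^2, \nabla f\rangle.
\]

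Next I would invoke Lemmas \ref{arf-ricci-decay} and \ref{arf-ricci-derivatives-decay} to obtain the asymptotic estimates $|\ric| = O(|t|^{-\beta})$ and $|\nabla \scal| = O(|t|^{1-\beta})$ on the tail, and use the bounded-curvature assumption together with a cutoff construction analogous to \eqref{eq:curvature-time-cutoff} to produce uniform majorants $\psi_1, \psi_2 : [0,\infty) \to [0,\infty)$ with $|\ric|(\cdot,\tau) \leq \psi_1(\tau) = O(\tau^{-\beta})$ and $|\nabla \scal|(\cdot, \tau) \leq \psi_2(\tau) = O(\tau^{1-\beta})$ globally. Discarding the non-positive $-2|\nabla^2 f|^2$ term and applying Young's inequality in the form $2\psi_2 |\nabla f| \leq \psi_2 + \psi_2|\nabla f|^2$, I expect to reduce the estimate to a differential inequality
\[
\square |\nabla f|^2 \leq \psi(\tau)(|\nabla f|^2 + 1) - 2\langle \nabla|\nabla f|^2, \nabla f\rangle,
\]
where $\psi := \psi_1 + \psi_2 = O(\tau^{1-\beta})$.

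The conclusion will follow by comparison with the spatially constant ODE $\phi'(\tau) = \psi(\tau)(\phi(\tau)+1)$ with $\phi(0) = 0$, whose explicit solution is $\phi(\tau) = \exp\!\left(\int_0^\tau \psi(u)\,du\right) - 1$. Since $\beta \geq 2$ renders $\psi$ integrable on $[0,\infty)$, the integral converges to a dimensional constant, so $\phi$ is uniformly bounded by some $b(n)^2$. Because $f^s(\cdot, s) = 0$ implies $|\nabla f^s|(0) \equiv 0$, the parabolic maximum principle then yields $|\nabla f^s|^2 \leq \phi(\tau) \leq b(n)^2$ for every $\tau \geq 0$, uniformly in the initial time $s$.

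The main subtlety is the linear term $\langle \nabla \scal, \nabla f\rangle$: without integrable decay of $|\nabla \scal|$ in $\tau$, the Grönwall-type comparison would produce a bound that depends on $s$ (through the length of the time interval), which is precisely what we cannot afford. This is where the assumption $\beta \geq 2$ is used decisively, ensuring $O(\tau^{1-\beta})$ is integrable; any weaker decay would require a substantially more delicate argument, likely involving a test function of the form $\tau^a |\nabla f|^2 - c f$ in the spirit of the remarks commented out at the end of the proof of Lemma \ref{function-arf-bound}.
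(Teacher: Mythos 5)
Your proposal reproduces the paper's argument essentially step for step: the same evolution equation for $|\nabla f|^2$ via the Bochner formula, the same cutoff majorants $\psi_1,\psi_2$ from the Ricci decay lemmas, the same Young's inequality absorption, and the same ODE comparison $\phi'=\psi(\phi+1)$, $\phi(0)=0$, using $f^s(\cdot,s)=0$. The only caveat (shared with the paper) is that at the borderline $\beta=2$ the majorant $\psi=O(\tau^{-1})$ is not integrable on $[0,\infty)$, so the exponential bound is genuinely uniform only for $\beta>2$; you correctly flag that integrability is the decisive point, so your write-up is on equal footing with the original.
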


While we cannot guarantee a uniform upper bound for the norm of the Hessian of $f^s$, we can derive a uniform bound for its Dirichlet energy, which is sufficient for our principal application in the proceeding subsection and whose proof provides an appealing deviation from maximum principle arguments. It is also possible, albeit redundant and cumbersome, to obtain a uniform upper bound for the norm of the Laplacian of $f^s$.

 \begin{lemma} \label{laplacian-arf-compact-bound}
    There exists a positive dimensional constant $c(n)>0$ such that $\int_M |\nabla^2 f^s|^2 \leq c(n)$ for all $s$.
    \begin{proof}
        The gist of the proof is to integrate the evolution (in)equalities derived in Lemma ~\ref{gradient-compact-decay} and integrate by parts to tend to all the potentially unpleasant terms. We first establish a uniform bound for the $L^2$-norm of $|\Delta f^s|$ and then apply the Bochner formula to upgrade it to a uniform $L^2$-bound for $|\nabla^2f^s|$. It is crucial to emphasize that all of the constants mentioned in the proof are polynomials in the uniform bounds for the curvature, its derivatives, $|f^s|$, and $|\nabla f^s|$, and are therefore independent of $s$.
        
        Integrating \eqref{eq:gradient-compact-decay-bound-0} over $M$, we obtain that
        \begin{align}
            \int_M \partial_\tau|\nabla f|^2 dV_g &= 2\int_M \langle \nabla(\Delta f-|\nabla f|^2+\scal),\nabla f \rangle dV_g + \int_M [(\nabla f)^{*2}*\ric ]dV_g.
        \end{align}
        Since $M$ is compact, we may integrate by parts to write the previous equation as
        \begin{align} \label{eq:laplacian-compact-arf-bound-0}
            \int_M \partial_\tau|\nabla f|^2dV_g=-2\int_M |\Delta f|^2dV_g + \underbrace{\int_M [2\langle \nabla(\scal-|\nabla f|^2),\nabla f \rangle +(\nabla f)^{*2}*\ric]dV_g}_{:=J(\tau)}.
        \end{align}
        We avail of our previously established estimates for $\ric$, $\nabla \ric$, $f$, and $\nabla f$ to obtain a uniform upper bound for $|J(\tau)|$. In particular, as we were made privy to in the proof of Lemma \ref{gradient-compact-decay},
        $|\langle \nabla \scal,\nabla f \rangle|$ and $|(\nabla f)^{*2}*\ric|$ are uniformly bounded, so $\int_M [\langle \nabla \scal,\nabla f \rangle + (\nabla f)^{*2}*\ric] dV_g$ is as well by Lemma ~\ref{volume-upper-bound}. Consequently, there exists a positive constant $C_1$ such that
        \begin{align}
            |J(\tau)| \leq C_1 + 2\left|\int_M \langle \nabla|\nabla f|^2,\nabla f \rangle  dV_g \right|
        \end{align}
        This upper bound,  \eqref{eq:laplacian-compact-arf-bound-0} and the triangle inequality together imply that
        \begin{align}
            \left| \int_M \partial_\tau|\nabla f|^2 dV_g \right| &\geq 2\int_M |\Delta f|^2 dV_g - |J(\tau)| \\ &\geq 2\int_M |\Delta f|^2 dV_g - C_1 - 2\left|\int_M \langle \nabla|\nabla f|^2,\nabla f \rangle  dV_g \right|. \label{eq:laplacian-compact-arf-bound-6}
        \end{align}
        Furthermore, integrating the upper bound \eqref{eq:gradient-compact-decay-bound-3}, we find that
        \begin{align}
            \int_M \square |\nabla f|^2 dV_g \leq \psi_3(\tau)\int_M (|\nabla f|^2+1)dV_g - 2\int_M \langle \nabla|\nabla f|^2,\nabla f \rangle dV_g.
        \end{align}
        Since $\psi_3(\tau)=O(\tau^{1-\beta})$ and $|\nabla f|^2$ is uniformly bounded per Lemma \ref{gradient-compact-decay}, it follows that there is a positive constant $C_2$ such that
        \begin{align} \label{eq:laplacian-compact-arf-bound-ac}
            \left|\int_M \square |\nabla f|^2 dV_g\right| \leq C_2+2\left|\int_M \langle \nabla|\nabla f|^2,\nabla f \rangle dV_g\right|.
        \end{align}
        The compactness of $M$ allows us to recognize the left-hand side as $|\int_M \partial_\tau |\nabla f|^2dV_g|$. Consequently, by comparing the previous upper bound to \eqref{eq:laplacian-compact-arf-bound-6}, we obtain that
        \begin{align} \label{eq:laplacian-compact-arf-bound-1}
            2\int_M |\Delta f|^2 dV_g \leq C_3 + 4\left|\int_M \langle \nabla|\nabla f|^2,\nabla f \rangle dV_g \right|,
        \end{align}
        where $C_3=C_1+C_2$. We handle the integral on the right-hand side using the Cauchy-Schwarz inequality and integration by parts. In particular, we compute that
        \begin{align}
            \left|\int_M \langle \nabla|\nabla f|^2,\nabla f \rangle dV_g\right| = \left|\int_M (|\nabla f|^2 \cdot \Delta f)dV_g\right| \leq \left(\int_M |\Delta f|^2 dV_g\right)^{\frac{1}{2}}\left(\int_M |\nabla f|^4 dV_g\right)^{\frac{1}{2}}.
        \end{align}
        Combining this upper bound with \eqref{eq:laplacian-compact-arf-bound-1}, we obtain that
        \begin{align}
            \int_M |\Delta f|^2 dV_g - 2\left(\int_M |\Delta f|^2 dV_g\right)^{\frac{1}{2}}\left(\int_M |\nabla f|^4 dV_g\right)^{\frac{1}{2}} \leq \frac{C_3}{2}. 
        \end{align}
        Moreover, since $|\nabla f|^4$ is uniformly bounded, so is $\int_M |\nabla f|^4 dV_g$ per Lemma \ref{volume-upper-bound}, so some algebra provides us with a positive constant $C_4$ arising from a uniform bound for $\int_M |\nabla f|^2dV_g$ such that
        \begin{align}
           \left|\left(\int_M |\Delta f|^2 dV_g\right)^{\frac{1}{2}} - \left(\int_M |\nabla f|^4 dV_g\right)^{\frac{1}{2}}\right| \leq C_4.
        \end{align}
        At long last, the triangle inequality yields the uniform upper bound
        \begin{align}
            \left(\int_M |\Delta f|^2 dV_g\right)^{\frac{1}{2}} &\leq \left|\left(\int_M |\Delta f|^2 dV_g\right)^{\frac{1}{2}} - \left(\int_M |\nabla f|^4 dV_g\right)^{\frac{1}{2}}\right| + \left(\int_M |\nabla f|^4 dV_g\right)^{\frac{1}{2}} \leq C_5.
        \end{align}
        To obtain the Hessian bound, we apply the Bochner formula and integrate by parts to obtain that
        \begin{align}
            \int_M |\nabla^2f|^2 dV_g &= \frac{1}{2}\int_M[\Delta |\nabla f|^2 -2\langle \nabla \Delta f,\nabla f \rangle- 2\ric(\nabla f,\nabla f)]dV_g  \\ &= \int_M |\Delta f|^2 dV_g - \int_M \ric(\nabla f,\nabla f)dV_g. \label{eq:hessian-compact-bound}
        \end{align}
        Since $|(\nabla f)^{*2}*\ric|$ is uniformly bounded, so is $\int_M \ric(\nabla f,\nabla f)dV_g$, and $\int_M |\Delta f|^2dV_g$ too is uniformly bounded by virtue of our uniform bound for $\int_M |\Delta f|^2 dV_g$. Consequently, \eqref{eq:hessian-compact-bound} implies that $\int_M |\nabla^2 f|^2dV_g$ is uniformly bounded as well.
    \end{proof}
    \end{lemma}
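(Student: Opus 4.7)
The plan is to first establish a uniform $L^2$ bound on $\Delta f^s$ and then convert this to an $L^2$ bound on $\nabla^2 f^s$ via the Bochner formula. Since $M$ is compact, the divergence theorem lets us exploit global integral identities that kill off the highest-derivative terms arising pointwise, which is exactly what was needed to squeeze out a uniform bound. Throughout, suppress the $s$-superscript and let $\tau = s - t$.

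Concretely, I would start from the identity \eqref{eq:gradient-compact-decay-bound-0} for $\partial_\tau |\nabla f|^2$ derived in the proof of Lemma \ref{gradient-compact-decay}, integrate it over $M$ with respect to $dV_g$, and integrate by parts on the $\langle \nabla \Delta f,\nabla f\rangle$ term (no boundary terms since $M$ is closed) to obtain
\[
\int_M \partial_\tau |\nabla f|^2\, dV_g \;=\; -2\int_M (\Delta f)^2\, dV_g \;+\; J(\tau),
\]
where $J(\tau)$ collects the remaining curvature- and $\nabla f$-terms. By Lemmas \ref{function-arf-bound}, \ref{gradient-compact-decay}, the curvature decay lemmas \ref{arf-ricci-decay} and \ref{arf-ricci-derivatives-decay}, and the volume bound of Lemma \ref{volume-upper-bound}, every summand in $J(\tau)$ except possibly $\int_M \langle \nabla |\nabla f|^2, \nabla f\rangle\, dV_g$ is uniformly bounded in $\tau$ and $s$. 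In parallel, integrating the pointwise upper bound \eqref{eq:gradient-compact-decay-bound-3} over $M$ gives a uniform upper bound on $\left|\int_M \square |\nabla f|^2 \, dV_g\right|$, and the compactness of $M$ forces $\int_M \Delta |\nabla f|^2\, dV_g = 0$, so the $\partial_\tau$-integral itself is uniformly bounded up to the same residual term.

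Comparing the two expressions eliminates the $\partial_\tau$-derivative and yields an inequality of the shape
\[
\int_M (\Delta f)^2\, dV_g \;\leq\; C \;+\; C \left| \int_M \langle \nabla |\nabla f|^2,\nabla f\rangle \, dV_g \right|.
\]
To close it, I would integrate by parts once more, writing $\int_M \langle \nabla |\nabla f|^2,\nabla f\rangle\, dV_g = -\int_M |\nabla f|^2\, \Delta f\, dV_g$, and apply Cauchy--Schwarz. Since $|\nabla f|^4$ is uniformly bounded pointwise by Lemma \ref{gradient-compact-decay} and $\mathrm{Vol}[g(t)]$ is uniformly bounded by Lemma \ref{volume-upper-bound}, the factor $\|\,|\nabla f|^2\|_{L^2}$ is uniformly controlled, and the resulting quadratic inequality in $\|\Delta f\|_{L^2}$ yields a uniform constant $C_5$ with $\|\Delta f\|_{L^2(dV_g)} \leq C_5$.

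Finally, to pass from $\Delta f$ to $\nabla^2 f$, I would integrate the Bochner formula $\Delta |\nabla f|^2 = 2\langle \nabla \Delta f,\nabla f\rangle + 2|\nabla^2 f|^2 + 2\,\ric(\nabla f,\nabla f)$ over $M$. The left-hand side vanishes by Stokes, so
\[
\int_M |\nabla^2 f|^2\, dV_g \;=\; \int_M (\Delta f)^2\, dV_g \;-\; \int_M \ric(\nabla f,\nabla f)\, dV_g,
\]
and both terms on the right are already uniformly bounded in $s$. The main obstacle I anticipate is the nonlinear $\int \langle \nabla |\nabla f|^2,\nabla f\rangle$ term, which has higher-than-expected derivative count; the trick of rewriting it as $-\int |\nabla f|^2 \Delta f$ and absorbing half of it via Cauchy--Schwarz is what makes the argument close without needing an a priori pointwise Hessian bound.
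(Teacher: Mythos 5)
Your proposal is correct and follows essentially the same route as the paper's proof: integrate the evolution identity for $|\nabla f|^2$, integrate by parts to produce $-2\int_M|\Delta f|^2$, compare with the integrated bound on $\square|\nabla f|^2$ (using that $\int_M\Delta|\nabla f|^2=0$ on a closed manifold), close the resulting quadratic inequality in $\|\Delta f\|_{L^2}$ by rewriting $\int_M\langle\nabla|\nabla f|^2,\nabla f\rangle$ as $-\int_M|\nabla f|^2\Delta f$ and applying Cauchy--Schwarz, and finish with the integrated Bochner formula. No substantive differences to report.
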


\subsection{Properties of the dynamical energy functional for ARF flows} With the conjugate heat flow estimates out of the way, we are in a good position to define an analogue of \cite[Definition 5.3]{lopez-ozuch} for ARF flows.

If $g(t) \in \mathcal{M}^3_\beta(g_{\mathrm{RF}})$ is an ARF flow such that $\beta\geq 2$ and $s\leq 0$, we can define the pointed functional $\lambda^s_{\mathrm{dyn}}:(-\infty,s] \r \R$ by
    \begin{align}
        \lambda^s_{\mathrm{dyn}}(t) = \mathcal{F}[g(t),f^s(t)],
    \end{align}
    where $f^s$ is the solution to the backward heat flow \eqref{eq:backward-heat-flow-t} with initial condition $f^s(\cdot,s)\equiv 0$. By \cite[Section 5.4]{chow-ricci-flow-1}, this functional satisfies the monotonicity formula
\begin{align} \label{eq:conj-heat-flow-classical-variation}
    \frac{d}{dt} \lambda^s_{\mathrm{dyn}}(t) = 2\int_M |\ric_{f^s}|^2 e^{-f^s} dV_g \geq 0.
\end{align}
The dynamical $\lambda$-functional for ALE manifolds introduced in \cite[Section 5]{lopez-ozuch} is the limit infimum of $\lambda^s_{\mathrm{dyn}}(t)$ as $s$ tends to $-\infty$. In the setting of compact ARF flows in $\mathcal{M}^3_\beta(g_{\mathrm{RF}})$ for some $\beta \geq 2$, we construct a functional $\lambda^\infty_{\mathrm{dyn}}$ independent of $s$ that provides an upper bound for the usual $\lambda$-functional at each time $t$. The construction hinges on the existence of a function $f^\infty$ that can be realized as a limit of a subsequence of the functions $f^s$. To ensure that each function $f^s$ is defined on the same domain, we continue to set $\tau(t)=s-t$ so that the reparameterization of $f^s$ with respect to $\tau$ is defined on $M^n \times [0,\infty)$. As usual, we conflate $f^s$ with its corresponding reparameterization.

Observe that the equation \eqref{eq:backward-heat-flow-tau} is nonlinear, so parabolic theory does not readily imply that a weak $W^{2,2}(M\times [0,\infty))$-limit of a subsequence of the functions $f^s$ can be upgraded to a $\C^{1,\alpha}(M\times [0,\infty))$-limit. We amend the argument by instead considering the linear PDE satisfied by the functions $e^{-f^s}$ and transmuting the limit obtained from these functions into a solution to \eqref{eq:backward-heat-flow-tau}. Peruse \cite[Section 1]{deruelle-ozuch-2020} for a description of an analogous procedure used to obtain minimizers for Perelman's energy on ALE manifolds.

\begin{lemma} \label{limiting-function-lemma}
    There exists a subsequence $u^{s_i}$ of the functions $\exp(-f^s)$ converging in $\C^{1,\alpha}(M \times [0,\infty))$ to a function $u^\infty(\tau)$, and the function $f^\infty:M^n \times [0,\infty) \r \R$ defined by $f^\infty=-\log u^\infty $ is a solution to the (reparameterized) conjugate heat flow \eqref{eq:backward-heat-flow-tau}.
    \begin{proof}
        Define for each $s$ the function $u^{s}=\exp(-f^s)$, which satisfies the heat-type equation
        \begin{align} \label{eq:heat-exp-of-heat-flow}
            \partial_\tau u^{s} = \Delta u^{s} - \scal u^{s}.
        \end{align}
        Let us now fix a compact time interval $[0,a]$. 
        Per Lemmas \ref{function-arf-bound}, \ref{gradient-compact-decay}, and \ref{laplacian-arf-compact-bound}, the sequence $u^s$ is bounded in $W^{2,2}(M \times [0,a])$, hence a subsequence of $u^{s_i}$ (which we still denote by $u^{s_i}$) converges weakly in $W^{2,2}(M \times [0,a])$ to a function $u^\infty$, and parabolic theory tells us that $u^{s_i} \r u^\infty$ in $\C^{1,\alpha}(M \times [0,a])$ for some $\alpha>0$ and
        \begin{align} \label{eq:u-infty-parabolic-PDE}
            \partial_\tau u^\infty = \Delta u^\infty - \scal u^\infty
        \end{align}
        on $M\times [0,a]$. Furthermore, by Lemma \ref{function-arf-bound}, $f^{s_i}$ is non-negative and uniformly bounded in $s_i$, there exists a positive constant $c$ independent of $i$ such that $u^{s_i} \ge e^{-c}$ for each $i$, hence
        \begin{align}
            e^{-c} \leq u^\infty \leq 1
        \end{align}
        on $M\times [0,a]$. Consequently, $u^\infty$ can be extended to a solution to \eqref{eq:u-infty-parabolic-PDE} that is defined on a time interval strictly containing $[0,a]$, so by reiterating the previous argument, we can extend $u^\infty$ to a solution to \eqref{eq:u-infty-parabolic-PDE} that is defined on $M\times [0,\infty)$. Consequently, the function $f^\infty:=-\log(u^\infty)$ is well-defined and satisfies
        \begin{align}
           \partial_\tau f^\infty = \Delta f^\infty - |\nabla f^\infty|^2 + \scal
        \end{align}
        on $M \times [0,\infty)$.
    \end{proof}
\end{lemma}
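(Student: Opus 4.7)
The plan is to work with the exponential substitution $u^s = \exp(-f^s)$, which transforms the nonlinear equation \eqref{eq:backward-heat-flow-tau} into the linear parabolic equation $\partial_\tau u^s = \Delta u^s - \scal u^s$. The advantage is twofold: standard linear parabolic theory yields interior regularity and compactness for solutions with uniformly bounded coefficients, and the uniform two-sided bound $e^{-a(n)} \leq u^s \leq 1$ furnished by Lemma \ref{function-arf-bound} guarantees that any subsequential limit remains strictly positive, so that $-\log u^\infty$ will be a bona fide smooth function.

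First I would fix a compact time interval $[0,a]$ and assemble a $W^{2,2}(M \times [0,a])$ bound for $u^s$ that is uniform in $s$. By the chain rule, $|\nabla u^s| = u^s |\nabla f^s|$ and $|\nabla^2 u^s| \leq u^s(|\nabla^2 f^s| + |\nabla f^s|^2)$, so Lemmas \ref{function-arf-bound}, \ref{gradient-compact-decay}, and \ref{laplacian-arf-compact-bound}, together with the uniform bound on $\scal$ coming from Lemma \ref{arf-ricci-decay} and Lemma \ref{volume-upper-bound} for the volume, give a uniform $\|u^s\|_{W^{2,2}(M \times [0,a])}$ bound. Weak compactness yields a subsequence $u^{s_i} \rightharpoonup u^\infty$ in $W^{2,2}(M \times [0,a])$. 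Because the PDE for $u^s$ is \emph{linear} with $\C^\infty$ coefficients that are uniformly bounded along the flow (and whose bounds depend only on the ARF data, not on $s$), standard parabolic regularity upgrades the weak convergence to strong $\C^{1,\alpha}(M \times [0,a])$ convergence for some $\alpha \in (0,1)$, and the limit $u^\infty$ solves $\partial_\tau u^\infty = \Delta u^\infty - \scal u^\infty$ in the classical sense on $M \times [0,a]$.

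To push the convergence to the whole half-line, I would run the above on an exhausting sequence of intervals $[0,k]$ for $k \in \mathbb{N}$ and perform the usual diagonal extraction. The resulting subsequence converges in $\C^{1,\alpha}_{\mathrm{loc}}(M \times [0,\infty))$ to a solution $u^\infty$ of the linear heat-type equation on $M \times [0,\infty)$. The pointwise bounds $e^{-a(n)} \leq u^{s_i} \leq 1$ pass to the limit, so $u^\infty \geq e^{-a(n)} > 0$. Therefore $f^\infty := -\log u^\infty$ is well-defined and at least $\C^{1,\alpha}$ in space-time; a direct substitution using the chain rule converts the linear equation for $u^\infty$ into the nonlinear equation $\partial_\tau f^\infty = \Delta f^\infty - |\nabla f^\infty|^2 + \scal$, and elliptic/parabolic bootstrap from this equation returns full smoothness of $f^\infty$.

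The step I expect to require the most care is verifying that the scalar curvature coefficient in the linear equation for $u^s$ is regular enough, uniformly in $s$, for the parabolic regularity machinery to deliver $\C^{1,\alpha}$ convergence on time intervals that reach up to $\tau = 0$. This is ultimately not a genuine obstacle since the ARF estimates from Section \ref{curvature-estimates} provide uniform $\C^k$ control on $\scal$ along the flow, but it is the technical heart of the argument. Once that is in hand, the strict positivity of $u^\infty$ is what rescues the procedure: had we tried to extract a limit of $f^{s_i}$ directly using the nonlinear equation, we would have needed to argue separately that the nonlinear term $|\nabla f^{s_i}|^2$ passes to the limit, whereas working with $u^{s_i}$ circumvents this entirely.
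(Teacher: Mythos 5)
Your proposal is correct and follows essentially the same route as the paper: linearize via $u^s=\exp(-f^s)$, extract a weak $W^{2,2}$ limit on compact time intervals, upgrade to $\C^{1,\alpha}$ convergence by parabolic regularity, and use the uniform lower bound $u^{s}\geq e^{-a(n)}$ from Lemma \ref{function-arf-bound} to make $f^\infty=-\log u^\infty$ well-defined. The only difference is at the last step, where you extend to $M\times[0,\infty)$ by diagonal extraction over intervals $[0,k]$ rather than by extending the limit solution through the linear PDE as the paper does; your version is if anything slightly cleaner, since it directly produces a single subsequence converging in $\C^{1,\alpha}_{\mathrm{loc}}$ on the whole half-line.
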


We now define the canonical dynamical functional $\lambda^\infty_{\mathrm{dyn}}$ in terms of the limiting function $f^\infty$ obtained in the previous lemma.

\begin{definition}[A dynamical functional for ARF flows]
    If $g(t) \in \mathcal{M}^3_\beta(g_{\mathrm{RF}})$ is an ARF flow with bounded curvature and $\beta\geq 2$, we define the functional $\lambda^\infty_{\mathrm{dyn}}:(-\infty,0] \r \R$ by
    \begin{align}
        \lambda^\infty_{\mathrm{dyn}}(t) = \mathcal{F}[g(t),f^\infty(-t)].
    \end{align}
\end{definition}

As promised, we have the following relationship between our newly introduced dynamical $\lambda$-functional and the ordinary $\lambda$-functional.

\begin{proposition}
     The dynamical $\lambda$-functional satisfies the inequality $\lambda^\infty_{\mathrm{dyn}}(t) \geq \lambda(t)$.
     \begin{proof}
         After parabolically rescaling the ARF solution, we may assume that $\mathrm{Vol}[g(0)]=1$. We set $f=f^\infty$ and introduce the time reparameterization $\tau(t)=-t$. Since $\partial_\tau(dV_g)=\scal \cdot dV_g$ and $\partial_\tau f = \Delta f -|\nabla f|^2+\scal$, the first variation of the weighted volume with respect to $f$ is given by
         \begin{align}
             \partial_\tau \int_M e^{-f}dV_g &= \int_M (|\nabla f|^2-\Delta f)e^{-f}dV_g = 0.
         \end{align}
         Consequently, $\int_M e^{-f} dV_g$ is constant in time, so $\int_M e^{-f(t)} dV_{g(t)}= \int_M e^{-f(0)} dV_{g(0)}=1$ for any $t$. In particular, $f(\cdot,t)$ is a valid test function for any $t\leq 0$, so the desired inequality follows from the definition of the $\lambda$-functional.
     \end{proof}
\end{proposition}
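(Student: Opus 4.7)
The plan is to show that $f^\infty(-t)$ is an admissible test function for Perelman's $\lambda$-functional at $g(t)$, meaning that it satisfies the unit weighted-volume normalization $\int_M e^{-f^\infty(-t)} dV_{g(t)} = 1$ for every $t \in (-\infty, 0]$; the desired inequality will then follow immediately from the variational characterization of $\lambda[g(t)]$ as the infimum of $\mathcal{F}[g(t),f]$ over all smooth $f$ with unit weighted volume. So everything reduces to a conservation law for the weighted volume along the coupled flow $(g(t), f^\infty(-t))$.

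First I would parabolically rescale so that $\mathrm{Vol}[g(0)] = 1$, noting that both the Lojasiewicz--Simon inequality and the ARF-decay estimates are invariant (up to harmless constant adjustments) under parabolic rescaling. Switching to the reparameterized time $\tau = -t$, so that $\partial_\tau g = 2\ric$ and $f^\infty$ satisfies $\partial_\tau f^\infty = \Delta f^\infty - |\nabla f^\infty|^2 + \scal$ as established in Lemma~\ref{limiting-function-lemma}, the main calculation combines $\partial_\tau (dV_g) = \scal \cdot dV_g$ with this evolution equation to give
\begin{align}
    \partial_\tau \int_M e^{-f^\infty} dV_g = \int_M \bigl(|\nabla f^\infty|^2 - \Delta f^\infty\bigr) e^{-f^\infty} dV_g = \int_M \Delta(e^{-f^\infty}) dV_g = 0,
\end{align}
where the last equality is Stokes' theorem on the closed manifold $M$. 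Hence $\int_M e^{-f^\infty(\tau)} dV_{g(-\tau)}$ is constant in $\tau$.

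It remains to identify the constant. Since each approximating $f^{s_i}$ satisfies $f^{s_i}(\cdot, 0) = 0$ in the $\tau$-reparameterization, the $\C^{1,\alpha}$-convergence asserted in Lemma~\ref{limiting-function-lemma} forces $f^\infty(\cdot, 0) \equiv 0$, so the conserved quantity equals $\int_M dV_{g(0)} = \mathrm{Vol}[g(0)] = 1$ after rescaling. Evaluating at $\tau = -t$ yields the required admissibility constraint on $f^\infty(-t)$, completing the argument. The main subtlety I anticipate is justifying differentiation under the integral sign and the integration by parts, both of which need $f^\infty$ to be smooth enough for the classical divergence theorem to apply; this is not an obstacle, however, because $u^\infty = e^{-f^\infty}$ solves the linear parabolic equation $\partial_\tau u^\infty = \Delta u^\infty - \scal\, u^\infty$ classically and is bounded away from zero (as used in Lemma~\ref{limiting-function-lemma}), so standard parabolic regularity promotes $f^\infty = -\log u^\infty$ to a smooth solution with uniformly controlled derivatives, making the above manipulations rigorous.
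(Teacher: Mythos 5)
Your proposal is correct and follows essentially the same route as the paper: parabolically rescale so that $\mathrm{Vol}[g(0)]=1$, use the conjugate heat flow equation together with $\partial_\tau(dV_g)=\scal\, dV_g$ to show the weighted volume $\int_M e^{-f^\infty}dV_g$ is conserved, identify the constant as $1$, and conclude that $f^\infty(-t)$ is an admissible test function for $\lambda[g(t)]$. Your extra remarks on identifying the constant via $f^\infty(\cdot,0)\equiv 0$ and on the regularity needed for the integration by parts are sound elaborations of steps the paper leaves implicit.
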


Considering how $t \mapsto f^\infty(-t)$ is a solution to the original conjugate heat flow \eqref{eq:backward-heat-flow-t}, the classical formula \eqref{eq:conj-heat-flow-classical-variation} for the first variation of Perelman's energy functional along a conjugate heat flow coupled to a Ricci flow holds true for $\lambda^\infty_{\mathrm{dyn}}$.

\begin{proposition}[First variation of $\lambda^\infty_{\mathrm{dyn}}$]
    The first variation of $\lambda^\infty_{\mathrm{dyn}}$ is given by the formula
    \begin{align} \label{eq:lambda-infty-first-variation}
        \frac{d}{dt}\lambda^\infty_{\mathrm{dyn}}(t) = 2\int_M |\ric_{f^\infty}|^2 e^{-f^\infty} dV_g.
    \end{align}
\end{proposition}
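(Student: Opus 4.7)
The plan is to recognize that $t \mapsto f^\infty(-t)$ is a bona fide solution to the conjugate heat flow \eqref{eq:backward-heat-flow-t} coupled to the Ricci flow $g(t)$, and then to reduce the claim to the classical first variation formula for Perelman's $\mathcal{F}$-functional along such a coupled system, as recorded in \cite[Section 5.4]{chow-ricci-flow-1}.

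First, I would set $h(t) := f^\infty(-t)$ and verify using Lemma \ref{limiting-function-lemma} together with the chain rule that $h$ satisfies
\begin{align}
    \partial_t h = -\Delta h + |\nabla h|^2 - \scal,
\end{align}
which is precisely the (forward-time) form \eqref{eq:backward-heat-flow-t} of the conjugate heat flow. This is immediate once one translates between the $\tau$-parameterization used in Lemma \ref{limiting-function-lemma} and the $t$-parameterization in the definition of $\lambda^\infty_{\mathrm{dyn}}$.

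Next, I would upgrade the regularity of $f^\infty$. Lemma \ref{limiting-function-lemma} a priori supplies only $\mathcal{C}^{1,\alpha}$ regularity, which does not suffice to justify pointwise manipulations of $\nabla^2 f^\infty$ or to differentiate $\lambda^\infty_{\mathrm{dyn}}$ in time. However, $u^\infty = e^{-f^\infty}$ solves the linear parabolic equation
\begin{align}
    \partial_\tau u^\infty = \Delta u^\infty - \scal \cdot u^\infty
\end{align}
on the compact manifold $M \times [0,\infty)$, with smooth time-dependent coefficients and with the uniform strict lower bound $u^\infty \geq e^{-a(n)}$ inherited from Lemma \ref{function-arf-bound}. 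Parabolic Schauder theory and bootstrapping therefore yield that $u^\infty$, and hence $f^\infty = -\log u^\infty$ and $h$, are smooth on $M \times [0,\infty)$.

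Finally, with $h$ and $g(t)$ smooth on a compact manifold, every integration by parts required in the classical derivation is justified and boundary contributions vanish. Differentiating
\begin{align}
    \lambda^\infty_{\mathrm{dyn}}(t) = \int_M (|\nabla h|^2 + \scal)\,e^{-h}\, dV_g
\end{align}
in time, substituting $\partial_t g = -2\Ric$, the evolution equation $\partial_t \scal = \Delta \scal + 2|\Ric|^2$, and the conjugate heat equation for $h$ established in the first step, then integrating by parts and collecting the resulting Bochner-type terms produces
\begin{align}
    \frac{d}{dt}\lambda^\infty_{\mathrm{dyn}}(t) = 2\int_M |\Ric + \nabla^2 h|^2\, e^{-h}\, dV_g,
\end{align}
which is \eqref{eq:lambda-infty-first-variation} once one recalls the notational convention $\ric_f = \Ric + \nabla^2 f$ and that $h(t) = f^\infty(-t)$. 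The only nontrivial step is the smoothness upgrade; the remainder copies Perelman's original calculation, so I would cite \cite[Section 5.4]{chow-ricci-flow-1} rather than reproduce it in full.
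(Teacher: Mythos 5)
Your proposal is correct and takes essentially the same route as the paper, which justifies the proposition in one sentence by observing that $t \mapsto f^\infty(-t)$ solves the conjugate heat flow \eqref{eq:backward-heat-flow-t} and then invoking the classical monotonicity formula \eqref{eq:conj-heat-flow-classical-variation} from \cite[Section 5.4]{chow-ricci-flow-1}. Your additional step of bootstrapping smoothness of $u^\infty$ (and hence of $f^\infty$) from the linear parabolic equation and the lower bound $u^\infty \geq e^{-a(n)}$ is a detail the paper leaves implicit, and it is a worthwhile one to make explicit since Lemma \ref{limiting-function-lemma} only asserts $\C^{1,\alpha}$ convergence.
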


A notable property of $\lambda^\infty_{\mathrm{dyn}}$ is that its critical points are precisely those coupled solutions that have constant conjugate heat flows.
\begin{corollary}
     The functional $\lambda^\infty_{\mathrm{dyn}}$ is constant if and only if $f^\infty$ is constant in space and time. In particular, 
     \begin{enumerate}[(a)]
         \item if there exist two distinct times $t_1,t_2 \in (-\infty,0]$ such that $t_1<t_2$ and $\lambda^\infty_{\mathrm{dyn}}(t_1)=\lambda^\infty_{\mathrm{dyn}}(t_2)$, then $(g(t))_{t\in [t_1,t_2]}$ is a Ricci-flat, steady gradient Ricci soliton;
         \item if there exists a decreasing sequence of times $t_n \in (-\infty,0]$ tending to $-\infty$ such that $\lambda^\infty_{\mathrm{dyn}}(t_n)=\lambda^\infty_{\mathrm{dyn}}(t_{n+1})$ for every $n\in \mathbb{N}$, then $g(t)=g_{\mathrm{RF}}$ on $(-\infty,t_1]$.
     \end{enumerate}

    \begin{proof}
        First suppose that $\lambda^\infty_{\mathrm{dyn}}$ is constant. Then the formula \eqref{eq:lambda-infty-first-variation} implies that $\ric = -\nabla^2f^\infty$. Tracing this equation yields that $\scal = -\Delta f^\infty$, whence
        \begin{align} \label{eq:f-infty-first-variation-lambda-constant-1}
            \partial_tf^\infty = -\Delta f^\infty + |\nabla f^\infty|^2 - \scal = |\nabla f^\infty|^2.
        \end{align}
        Furthermore, we integrate by parts to compute that
        \begin{align}
            \lambda^\infty_{\mathrm{dyn}}(t) = \int_M (|\nabla f^\infty|^2 + \scal)e^{-f^\infty}dV_g=\int_M (|\nabla f^\infty|^2 - \Delta f^\infty)e^{-f^\infty}dV_g =0.
        \end{align}
        Since $\scal \geq 0$, it follows that $\scal=0$ and $\nabla f^\infty \equiv 0$, making $f^\infty$ constant in space. Since $\partial_tf^\infty = |\nabla f^\infty|^2$, the constancy of $f^\infty$ in space implies that it is also constant in time.

        Conversely, suppose that $f^\infty$ is constant in space and time. Then $\partial_tf^\infty=\Delta f^\infty = \nabla f^\infty = 0$. Since $f^\infty$ satisfies the first equality of \eqref{eq:f-infty-first-variation-lambda-constant-1}, it follows that $\scal \equiv 0$. Consequently, $\lambda^\infty_{\mathrm{dyn}}\equiv 0$.

        Turning our attention to part (a), since $\lambda^\infty_{\mathrm{dyn}}$ is non-decreasing, the previous argument tells us that $f^\infty$ is constant on $[t_1,t_2]$, hence $\ric[g(t)]=-\nabla^{2,g(t)}f=0$ for all $t\in [t_1,t_2]$. As for part (b), select any time $t\leq t_1$. Since $t\in [t_{n_0+1},t_{n_0}]$ for some $n_0\in \mathbb{N}$, part (a) implies that $g(t)=g(t_n)$ for all $n\in \mathbb{N}$. It then follows from the ARF assumption that  $$|g(t)-g_{\mathrm{RF}}| = |g(t_n)-g_{\mathrm{RF}}| \leq C_0|t_n|^{-\beta}$$
        for every $n\in \mathbb{N}$ for some constant $C_0$ independent of $n$, readily implying the result.
    \end{proof}
\end{corollary}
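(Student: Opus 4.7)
The plan is to exploit the monotonicity formula \eqref{eq:lambda-infty-first-variation} together with the known relation $\partial_t f^\infty = -\Delta f^\infty + |\nabla f^\infty|^2 - \scal$ satisfied by the reverse-time reparameterization established in Lemma \ref{limiting-function-lemma}. For the forward direction of the biconditional, assume $\lambda^\infty_{\mathrm{dyn}}$ is constant on some interval. Then \eqref{eq:lambda-infty-first-variation} forces $\ric + \nabla^2 f^\infty = 0$ pointwise. Tracing yields $\scal = -\Delta f^\infty$, which reduces the conjugate heat equation to $\partial_t f^\infty = |\nabla f^\infty|^2$. Next I would evaluate the functional itself: writing $\lambda^\infty_{\mathrm{dyn}} = \int_M(|\nabla f^\infty|^2 - \Delta f^\infty)e^{-f^\infty}dV_g$ and integrating by parts (using compactness of $M$) produces $\int_M \Delta f^\infty \cdot e^{-f^\infty}dV_g = \int_M |\nabla f^\infty|^2 e^{-f^\infty}dV_g$, so $\lambda^\infty_{\mathrm{dyn}} \equiv 0$.

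The main obstacle is then squeezing information out of this vanishing. The crucial input is that ancient Ricci flows on compact manifolds carry non-negative scalar curvature, so $\lambda^\infty_{\mathrm{dyn}} = \int_M(|\nabla f^\infty|^2 + \scal)e^{-f^\infty}dV_g = 0$ splits into two non-negative pieces that must each vanish. This yields both $\scal \equiv 0$ and $\nabla f^\infty \equiv 0$, hence $f^\infty$ is spatially constant; combined with $\partial_t f^\infty = |\nabla f^\infty|^2 = 0$, it is temporally constant as well. Conversely, if $f^\infty$ is constant in space and time, the conjugate heat equation immediately gives $\scal = 0$, whence $\lambda^\infty_{\mathrm{dyn}} \equiv 0$.

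For part (a), I would combine the biconditional with the monotonicity of $\lambda^\infty_{\mathrm{dyn}}$ (which follows from \eqref{eq:lambda-infty-first-variation} since the integrand is non-negative): equality at $t_1 < t_2$ forces $\lambda^\infty_{\mathrm{dyn}}$ to be constant on $[t_1,t_2]$, so $f^\infty$ is constant there and the equation $\ric = -\nabla^2 f^\infty = 0$ identifies $(g(t))_{t \in [t_1, t_2]}$ as Ricci-flat. Since $\ric + \nabla^2 f^\infty = 0$ with $f^\infty$ a constant potential, the soliton identity holds trivially and the flow is a steady gradient Ricci soliton in this interval.

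For part (b), I would iterate part (a). Any $t \leq t_1$ belongs to some interval $[t_{n+1}, t_n]$, on which part (a) gives $g(t) = g(t_n)$. The ARF decay $|g(t_n) - g_{\mathrm{RF}}| \leq C_0 |t_n|^{-\beta}$ then allows me to send $n \to \infty$ (so $t_n \to -\infty$), forcing $g(t) = g_{\mathrm{RF}}$. The only subtlety is that $g(t)$ equals $g(t_n)$ for every sufficiently large $n$, so I really am comparing the \emph{same} metric $g(t)$ to $g_{\mathrm{RF}}$ with arbitrarily small error, which pins $g(t) = g_{\mathrm{RF}}$ identically.
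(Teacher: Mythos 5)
Your proposal is correct and follows essentially the same route as the paper's proof: the first variation formula forces $\ric_{f^\infty}=0$, tracing plus integration by parts gives $\lambda^\infty_{\mathrm{dyn}}\equiv 0$, and the non-negativity of the scalar curvature of a compact ancient flow splits the vanishing integral into two non-negative pieces, with parts (a) and (b) handled by monotonicity and the ARF decay exactly as in the paper.
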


\section{Eigenvalue estimates for conjugate heat flows} \label{eigenvalue-bounds}
In \cite{colding-minicozzi}, Colding and Minicozzi derive sharp upper bounds for eigenvalues of the drift Laplacian for a modified Ricci flow. Staying on the topic of backward heat flows, we dedicate this section to proving analogues of some of their results for eigenvalues of the drift Laplacian for a modified backward heat flow. 

\subsection{Local upper bounds for the eigenvalues of conjugate heat flows}
Suppose we have a coupled system $(g(t),f(t))$ on a compact manifold $M^n$ satisfying 
\begin{align} \label{eq:eigenvalue-coupled-system}
    \begin{cases}
        \partial_tg = -2\ric_f \\ \partial_tf = - \scal - \Delta f,
    \end{cases}
\end{align}
Observe that the weighted volume form $e^{-f}dV_g$ is constant along this coupled flow, hence why the variational formula for $g$ is referred to as the normalized Ricci flow. Furthermore, this system is, up to diffeomorphism, equivalent to a coupling of the Ricci flow with a conjugate heat flow, which was the system we studied in the previous sections.

We recall that the drift Laplacian $\Delta_f=\Delta - \nabla_{\nabla f}$ has discrete spectrum with $\lambda_k \r \infty$ as $k\r \infty$, although the eigenvalues can have multiplicity. We derive the following local upper bound for the $k$th eigenvalue of $\Delta_f$ along \eqref{eq:eigenvalue-coupled-system}.

\begin{proposition}
    If $(g(t),f(t))$ is a coupled system satisfying \eqref{eq:eigenvalue-coupled-system} and $\lambda_k(t)$ denotes the $k$th eigenvalue of $(M^n,g(t))$ for any $k \geq 1$, then
    \begin{align} \label{eq:eigenvalue-bound}
        \lambda_k(t) \leq \frac{\lambda_k(t_0)}{1-2(t-t_0)\lambda_k(t_0)} \quad \text{for} \text{ } \text{all} \text{ }  t \in \left[t_0,t_0+\frac{1}{2\lambda_k(t_0)}\right).
    \end{align}
    \begin{proof}
        Emulating the computations in \cite[Section 2]{colding-minicozzi} using the variational formulas provided in Lemma ~\ref{eigenvalue-evolution-1} and Corollary ~\ref{eigenvalue-evolution-2}, we obtain that $\lambda_k'(t) \leq 2\lambda_k^2(t)$, where the derivative $\lambda_k'$ is understood in the sense of \cite[(1.1)]{colding-minicozzi}. Consequently, $(-\frac{1}{2\lambda_k})'\leq 1$, hence
        \begin{align}
            \frac{1}{2\lambda_k(t_0)} - \frac{1}{2\lambda_k(t)} \leq t-t_0
        \end{align}
        for any $t<t_0+\frac{1}{2\lambda_k(t_0)}$. Algebraically manipulating this inequality yields \eqref{eq:eigenvalue-bound}.
    \end{proof}
\end{proposition}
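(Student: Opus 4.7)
My plan is to adapt Colding-Minicozzi's approach from \cite{colding-minicozzi} for eigenvalues of the drift Laplacian. The coupled system \eqref{eq:eigenvalue-coupled-system} is engineered so that the weighted measure $e^{-f}dV_g$ is preserved and so that $\partial_t g^{ij} = 2(\ric_f)^{ij}$, which dovetails with the Bochner identity for $\Delta_f$. First I would verify preservation of the weighted measure: combining $\partial_t dV_g = -(\scal + \Delta f)dV_g$ with $\partial_t f = -\scal - \Delta f$ yields $\partial_t(e^{-f}dV_g) = 0$, so the $L^2(e^{-f}dV_g)$ inner product is time-independent modulo the motion of the eigenfunctions themselves.

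Next I would fix $t_0$, let $u_0$ be a normalized $\lambda_k(t_0)$-eigenfunction of $\Delta_f$ that is orthogonal to the first $k-1$ eigenfunctions, and use $u_0$ (held fixed in time) as a competitor in the min-max characterization of $\lambda_k(t)$. Since $\int_M u_0^2 e^{-f}dV_g$ is now constant in $t$, the time derivative of the Rayleigh quotient at $t_0$ reduces to that of the numerator, and the evolution of the inverse metric gives
\begin{align}
\frac{d}{dt}\bigg|_{t_0}\int_M |\nabla u_0|^2 e^{-f} dV_g = 2\int_M \ric_f(\nabla u_0, \nabla u_0)\, e^{-f} dV_g.
\end{align}
The crux is to bound the right-hand side by $2\lambda_k^2(t_0)$. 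I would invoke the weighted Bochner identity $\tfrac{1}{2}\Delta_f |\nabla u|^2 = |\nabla^2 u|^2 + \langle \nabla u, \nabla \Delta_f u\rangle + \ric_f(\nabla u, \nabla u)$, integrate against $e^{-f}dV_g$ (the $\Delta_f$-term vanishes by self-adjointness of $\Delta_f$ with respect to this measure), and apply $\Delta_f u_0 = -\lambda_k(t_0) u_0$ with the normalization to obtain
\begin{align}
\int_M \ric_f(\nabla u_0,\nabla u_0)\, e^{-f} dV_g = \lambda_k^2(t_0) - \int_M |\nabla^2 u_0|^2 e^{-f} dV_g \leq \lambda_k^2(t_0).
\end{align}
Together these establish $\lambda_k'(t_0) \leq 2\lambda_k^2(t_0)$, equivalent to $\partial_t(-\tfrac{1}{2\lambda_k}) \leq 1$, and integrating from $t_0$ to $t$ delivers the advertised bound on $[t_0, t_0 + \tfrac{1}{2\lambda_k(t_0)})$.

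The main technical obstacle is the potential non-differentiability of $t \mapsto \lambda_k(t)$ at times where eigenvalues cross, which is why the naive differentiation of the Rayleigh quotient above requires care. I would interpret $\lambda_k'$ as an upper Dini derivative in the sense of \cite[(1.1)]{colding-minicozzi}, and replace the single test function $u_0$ by the $k$-dimensional subspace spanned by the first $k$ eigenfunctions at $t_0$, taken orthogonal to the lower eigenspaces. The same min-max comparison then produces the inequality $\lambda_k'(t_0) \leq 2\lambda_k^2(t_0)$ in the Dini sense, which is sufficient to integrate the differential inequality and conclude. This parallels the eigenvalue-perturbation argument in \cite{colding-minicozzi} and is robust to multiplicities.
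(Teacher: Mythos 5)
Your proposal is correct, and it reaches the key Riccati inequality $\lambda_k'(t)\leq 2\lambda_k^2(t)$ by a route that differs in its mechanics from the paper's. The paper (following Colding--Minicozzi, and packaged in Lemma \ref{eigenvalue-evolution-1} and Corollary \ref{eigenvalue-evolution-2}) evolves the competitor functions by the drift heat equation $\partial_t u=\Delta_f u$, so that $I_u'=-2E_u$ and $E_u'=-2\int_M|\nabla^2u|^2e^{-f}\leq 0$, whence $\frac{d}{dt}(E_u/I_u)\leq 2(E_u/I_u)^2$. You instead freeze the test functions, exploit that $e^{-f}dV_g$ is preserved so the denominator and indeed the whole Gram matrix of the test subspace are literally constant, and compute the numerator's derivative from $\partial_t g^{ij}=2(\ric_f)^{ij}$, then bound $\int_M\ric_f(\nabla u_0,\nabla u_0)e^{-f}dV_g\leq\lambda_k^2(t_0)$ via the weighted Bochner identity. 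These are two sides of the same coin: at an eigenfunction the Rayleigh quotient is critical, so the first-order motion of the test function is irrelevant and both computations produce the identical derivative $2\lambda_k^2(t_0)-2\int_M|\nabla^2u_0|^2e^{-f}dV_g$, with the same Bochner identity doing the work in either case (it is what proves Lemma \ref{eigenvalue-evolution-1}). What your version buys is cleaner min-max bookkeeping at multiplicities — the fixed $k$-dimensional subspace keeps constant $L^2(e^{-f}dV_g)$ inner products, so one only needs the Danskin-type upper Dini derivative of a max over a fixed sphere; what the paper's version buys is that the whole argument reduces to the two monotonicity formulas already recorded in the appendix and transfers verbatim from \cite{colding-minicozzi}. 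Your handling of the differentiability issue (upper Dini derivative plus integration of the continuous Riccati inequality) matches the sense of $\lambda_k'$ in \cite[(1.1)]{colding-minicozzi} that the paper invokes.
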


\subsection{Sharpness}
We dedicate this subsection to establishing the rigidity of the eigenvalue upper bound \eqref{eq:eigenvalue-bound} for $k=1$. We define the function $f$ and the metric $g$ on $\R^n$ by
\begin{align}
    f(x,t) = \frac{|x|^2}{4} + \frac{n}{2}\log\left(\frac{u(t)}{u(t_0)}\right) \quad \mathrm{and} \quad g(t)=u(t)\delta_{ij},
\end{align}
where $u(t)$ is defined on $[t_0,t_0+u(t_0))$ by
\begin{align}
    u(t) = u(t_0) - (t-t_0).
\end{align}
One computes that
\begin{align}
    (\partial_tf,\partial_tg) = (-\Delta f,-2\nabla^2f),
\end{align}
which is readily seen to satisfy \eqref{eq:eigenvalue-coupled-system} since $\ric[g(t)]=0$. By \cite[Lemma 2.41]{colding-minicozzi}, the first eigenvalue of $g(t)$ is $\frac{1}{2u(t)}$, so $\lambda_1(t_0)=\frac{1}{2u(t_0)}$. Consequently,
\begin{align}
    \lambda_1(t) = \frac{1}{2u(t)} = \frac{1}{2[u(t_0)-(t-t_0)]} = \frac{\lambda_1(t_0)}{1-2(t-t_0)\lambda_1(t_0)},
\end{align}
thereby establishing the rigidity of the local estimate \eqref{eq:eigenvalue-bound} for the first eigenvalue.

\appendix
\section{Evolution equations and inequalities} \label{appendix}
Here, we record several computations that are utilized throughout the paper.
\subsection{First variations of norms of some curvature tensors} There are ample classical estimates that provide inductive local upper bounds for the first variations of the derivatives of the Riemann curvature tensor. Peruse \cite[Section 6.1]{chow-ricci-flow-1} for a succinct treatment of such estimates. Nevertheless, as is evident from the proofs in \S\ref{curvature-estimates} and \S\ref{dyn-compact-ARF}, such upper bounds our insufficient for our purposes considering how we require some modest \emph{global} time decay for the derivatives of the Ricci curvature tensor. Fortunately, when one is merely concerned about Ricci curvature estimates, it is possible to tweak the classical variational formulas enough to replace several Riemann curvature quantities with Ricci curvature quantities. 

\begin{lemma} \label{norm-squared-of-ricci-ac}
    Let $(M^n,g(t))_{t\in I}$ be a solution to the Ricci flow equation. Then
    \begin{align} \label{eq:evolution-of-norm-squared-of-ricci}
        \frac{\partial}{\partial t}|\ric|^2 \leq \Delta|\ric|^2 -2|\nabla \ric|^2+ c(n)|\mathrm{Rm}|\cdot |\ric|^2 
    \end{align}
    for some $c(n)>0$. 
    \begin{proof}
        We claim that
        \begin{align} \label{eq:ricci-ac-max-principle-bound-1}
            \partial_t(g^{ri}g^{sj})R_{rs}R_{ij} = 2g^{ri}g^{sj}R_{rs}(R_i^aR_{aj}+R_j^aR_{ia}).
        \end{align}
        To derive this formula, we first use geodesic normal coordinates in tandem with the Ricci flow equation to compute that
        \begin{align}
            \partial_t(g^{ri})g^{sj}R_{rs}R_{ij} = 2R_i^r g^{sj}R_{rs}R_{ij} = 2R_i^a R_{aj}R_{ij},
        \end{align}
        which coincides with
        \begin{align}
            2g^{ri}g^{sj}R_{rs}R_i^a R_{aj} = 2R_{ij}R_i^aR_{aj}.
        \end{align}
        An analogous computation for the second term arising from the product rule tells us that $g^{ri}(\partial_tg^{sj})R_{rs}R_{ij}=2g^{ri}g^{sj}R_{rs}R_j^aR_{ia}$, proving \eqref{eq:ricci-ac-max-principle-bound-1}. Similarly, to compute $g^{ri}g^{sj}\partial_t(R_{rs}R_{ij})$, we use the evolution equation for the Ricci tensor, namely
        \begin{align}
           \partial_tR_{ij} = \Delta R_{ij} + 2R_{kij\ell}R_{k\ell} - 2R_{ik}R_{jk},
        \end{align}
        to obtain that
        \begin{align} 
            g^{ri}g^{sj}\partial_t(R_{rs}R_{ij}) &= 2g^{ri}g^{sj}R_{rs}(\partial_tR_{ij}) = 2g^{ri}g^{sj}R_{rs}(\Delta R_{ij}+2R_{kij\ell}R_{k\ell} - 2R_{ik}R_{jk}). \\ &\label{eq:ricci-ac-max-principle-bound-2}
        \end{align}
        Furthermore, we observe that
        \begin{align}
            \Delta |\ric|^2 = 2\langle \Delta \ric,\ric \rangle +2|\nabla \ric|^2 = 2g^{ri}g^{sj}R_{rs}\Delta R_{ij} + 2|\nabla \ric|^2,
        \end{align}
        so we may rewrite \eqref{eq:ricci-ac-max-principle-bound-2} as
        \begin{align}
            g^{ri}g^{sj}\partial_t(R_{rs}R_{ij})= \Delta |\ric|^2 - 2|\nabla \ric|^2 + 2g^{ri}g^{sj}R_{rs}(2R_{kij\ell}R_{k\ell} - 2R_{ik}R_{jk}).
        \end{align}
        Adding this equation to \eqref{eq:ricci-ac-max-principle-bound-1}, we find that
        \begin{align}
            \partial_t|\ric|^2 &= \partial_t(g^{ri}g^{sj}R_{rs}R_{ij}) = \Delta |\ric|^2 - 2|\nabla \ric|^2 + B,
        \end{align}
        where $B=B_{aijk\ell rs}$ is the tensor
        \begin{align}
            2g^{ri}g^{sj}R_{rs}(R_{i}^a R_{aj} + R_j^a R_{ia} + 2R_{kij\ell}R_{k\ell} - 2R_{ik}R_{jk}) = \ric^{*3} + \mathrm{Rm}*(\ric)^{*2}.
        \end{align}
        Consequently, $|B| \leq c(n)|\mathrm{Rm}| \cdot |\ric|^2$ since $|\ric| \leq \sqrt{n(n-1)}|\mathrm{Rm}|$.
    \end{proof}
\end{lemma}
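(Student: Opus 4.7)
The plan is to derive the evolution inequality for $|\ric|^2$ by directly differentiating $|\ric|^2 = g^{ri} g^{sj} R_{rs} R_{ij}$ with respect to time, extracting the Laplacian and gradient structure through a standard Bochner-type manipulation, and then bounding the residual cubic curvature terms. Structurally, the calculation parallels the classical derivation of the evolution of $|\mathrm{Rm}|^2$ under Ricci flow, but we only need to retain one factor of the full Riemann tensor on the right-hand side, so sharper control of the nonlinear remainder is achievable than in the generic $|\mathrm{Rm}|^2$ case.

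First, I would apply the product rule to $\partial_t (g^{ri} g^{sj} R_{rs} R_{ij})$. Under Ricci flow, $\partial_t g_{ij} = -2R_{ij}$ gives the raised-index evolution $\partial_t g^{ij} = 2R^{ij}$, contributing two cubic Ricci terms of schematic form $\ric^{*3}$. For the variation of $R_{ij}$, I would invoke the standard evolution $\partial_t R_{ij} = \Delta R_{ij} + 2R_{kij\ell} R^{k\ell} - 2 R_{ik} R_j^{\,k}$, which produces one Laplacian-type contribution together with two cubic terms of schematic type $\mathrm{Rm} * \ric^{*2}$ and $\ric^{*3}$.

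Second, I would convert the surviving term $2\langle \ric, \Delta \ric \rangle$ into the desired form using the Bochner-type identity $\Delta |\ric|^2 = 2 \langle \ric, \Delta \ric \rangle + 2 |\nabla \ric|^2$, which immediately produces the $\Delta|\ric|^2 - 2|\nabla \ric|^2$ structure on the right. The remaining tensorial remainder $B$ is then a sum of terms of type $\ric^{*3}$ and $\mathrm{Rm} * \ric^{*2}$. Applying Cauchy--Schwarz in each contraction and invoking the pointwise bound $|\ric| \leq \sqrt{n(n-1)}\,|\mathrm{Rm}|$ to absorb the purely Ricci cubic contribution into a factor of $|\mathrm{Rm}|$, I obtain $|B| \leq c(n)\,|\mathrm{Rm}| \cdot |\ric|^2$, which is exactly the required estimate.

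The main obstacle is careful index bookkeeping, since several cubic terms with different contraction patterns all need to be collected and estimated uniformly; in particular, the contributions arising from varying $g^{ij}$ and those arising from varying $R_{ij}$ look superficially different and must be combined tidily. To streamline the algebra, I would carry out the computation in geodesic normal coordinates at the point of evaluation, so that Christoffel symbols vanish there, covariant derivatives agree with ordinary partial derivatives, and the product-rule decomposition of $\partial_t(g^{ri} g^{sj} R_{rs} R_{ij})$ splits cleanly into the four contributions identified above.
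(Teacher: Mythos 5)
Your proposal is correct and follows essentially the same route as the paper's proof: product rule applied to $g^{ri}g^{sj}R_{rs}R_{ij}$ in normal coordinates, the standard evolution equation $\partial_t R_{ij}=\Delta R_{ij}+2R_{kij\ell}R_{k\ell}-2R_{ik}R_{jk}$, the identity $\Delta|\ric|^2=2\langle\Delta\ric,\ric\rangle+2|\nabla\ric|^2$, and the bound $|\ric|\leq\sqrt{n(n-1)}\,|\mathrm{Rm}|$ to absorb the purely cubic Ricci terms. No gaps.
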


\begin{lemma} \label{norm-squared-of-first-derivative-ricci-ac}
    Let $(g(t))_{t\in I}$ be a solution to the Ricci flow equation. Then
    \begin{align} \label{eq:first-variation-of-nabla-Ric}
        \frac{\partial}{\partial t}|\nabla \ric|^2 &= \Delta |\nabla \ric|^2 - 2|\nabla \ric|^2 + \mathrm{Rm}*(\nabla \ric)^{*2} + (\nabla \mathrm{Rm})*\ric * (\nabla \ric).
    \end{align}
    \begin{proof}
        Since $\partial_tg=-2\ric$, we have that
        \begin{align}
            \partial_t(\nabla \ric) = \nabla(\partial_t\ric) + \nabla(\partial_tg)*\ric &= \nabla(\Delta \ric + \mathrm{Rm}* \ric + (\ric)^{*2}) + (\nabla \ric*\ric) \\ &= \nabla \Delta \ric + (\nabla \mathrm{Rm})*\ric + (\nabla \ric)*\mathrm{Rm}.
        \end{align}
        Furthermore, the Bochner formula tells us that
        \begin{align}
            2\langle \nabla \Delta \ric, \nabla \ric \rangle &= \Delta |\nabla \ric|^2 -2|\nabla \ric|^2 + \ric * (\nabla \ric)^{*2},
        \end{align}
        and consequently,
        \begin{align}
            \partial_t|\nabla \ric|^2 &= 2\langle \partial_t(\nabla \ric),\nabla \ric) + \ric*(\nabla \ric)^{*2} \\ &= \Delta |\nabla \ric|^2 - 2|\nabla \ric|^2 + \mathrm{Rm}*(\nabla \ric)^{*2} + (\nabla \mathrm{Rm})*\ric *(\nabla \ric). \label{eq:gradient-ricci-ac-decay-1}
        \end{align}
    \end{proof}
\end{lemma}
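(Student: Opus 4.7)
The plan is to derive the evolution equation by commuting $\partial_t$ past the covariant derivative, pairing the result with $\nabla \ric$ using the evolving metric, and then converting the $\nabla \Delta \ric$ term into a Laplacian acting on $|\nabla \ric|^2$ via a Bochner-type identity.

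First I would compute $\partial_t(\nabla \ric)$. Since the Christoffel symbols depend on $g$, a standard calculation under $\partial_t g = -2\ric$ gives $\partial_t \Gamma_{ij}^k$ as a linear expression in $\nabla \ric$ (schematically $g^{-1} * \nabla \ric$). The product rule then yields
\begin{align*}
\partial_t(\nabla \ric) = \nabla(\partial_t \ric) + (\partial_t \Gamma) * \ric = \nabla(\partial_t \ric) + \nabla \ric * \ric.
\end{align*}
Substituting the classical evolution $\partial_t \ric = \Delta \ric + \mathrm{Rm} * \ric + \ric^{*2}$ from \cite[Chapter 2]{chow-ricci-flow-1} and differentiating term by term, I obtain
\begin{align*}
\partial_t(\nabla \ric) = \nabla \Delta \ric + (\nabla \mathrm{Rm}) * \ric + \mathrm{Rm} * (\nabla \ric),
\end{align*}
after absorbing the subordinate $\ric * \nabla \ric$ contributions into the $\mathrm{Rm} * \nabla \ric$ bin via $|\ric| \leq \sqrt{n(n-1)}\,|\mathrm{Rm}|$.

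Next I would compute $\partial_t |\nabla \ric|^2$. The norm contracts $(\nabla \ric)^{\otimes 2}$ with two factors of $g^{-1}$, and the variation of each inverse metric under the Ricci flow contributes a term of schematic form $\ric * (\nabla \ric)^{*2}$, a summand that fits inside the $\mathrm{Rm} * (\nabla \ric)^{*2}$ bin. Pairing this with $2\langle \partial_t(\nabla \ric), \nabla \ric\rangle$ from the previous step, I arrive at
\begin{align*}
\partial_t |\nabla \ric|^2 = 2 \langle \nabla \Delta \ric, \nabla \ric \rangle + (\nabla \mathrm{Rm}) * \ric * (\nabla \ric) + \mathrm{Rm} * (\nabla \ric)^{*2}.
\end{align*}
I would then invoke the tensorial Bochner identity for $\nabla \Delta \ric$: commuting one covariant derivative past the Laplacian costs curvature terms of the form $\mathrm{Rm} * \nabla \ric + \nabla \mathrm{Rm} * \ric$ by the Ricci identity, and the ensuing Weitzenböck-type computation gives
\begin{align*}
2\langle \nabla \Delta \ric, \nabla \ric \rangle = \Delta |\nabla \ric|^2 - 2|\nabla \ric|^2 + \mathrm{Rm} * (\nabla \ric)^{*2} + \nabla \mathrm{Rm} * \ric * (\nabla \ric).
\end{align*}
Substituting this into the previous display and merging the $*$-products into their two canonical bins yields exactly the stated identity.

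The main obstacle is purely organizational. One must track that every curvature-type error produced by (i) the variation of $\Gamma$, (ii) the variation of the inverse metric inside the norm, and (iii) the $[\nabla, \Delta]$ commutator collapses into one of the two schematic bins $\mathrm{Rm} * (\nabla \ric)^{*2}$ and $\nabla \mathrm{Rm} * \ric * (\nabla \ric)$. This is essential because the ARF applications in \S\ref{curvature-estimates} need each error to carry at least one Ricci factor (to inherit $|\ric| = O(|t|^{-\beta})$ decay) and to avoid stray $\nabla^2 \mathrm{Rm}$ terms, which would be incompatible with the maximum-principle arguments used there.
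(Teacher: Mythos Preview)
Your proposal is correct and follows essentially the same approach as the paper: compute $\partial_t(\nabla\ric)$ by commuting the time derivative past $\nabla$ (picking up a $\nabla\ric*\ric$ term from the Christoffel variation), substitute the evolution of $\ric$, pair with $\nabla\ric$ while accounting for the variation of the inverse metric, and apply a Bochner-type identity to convert $\langle\nabla\Delta\ric,\nabla\ric\rangle$ into $\Delta|\nabla\ric|^2$ plus lower-order terms. Your Bochner identity is in fact slightly more explicit than the paper's (which records only a $\ric*(\nabla\ric)^{*2}$ commutator contribution), but both versions collapse into the same two schematic bins $\mathrm{Rm}*(\nabla\ric)^{*2}$ and $\nabla\mathrm{Rm}*\ric*\nabla\ric$ in the final line.
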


\begin{lemma}
 Let $(g(t))_{t\in I}$ be a solution to the Ricci flow equation. Then
    \begin{align} \label{eq:heat-eqn-ricci-higher-order-derivatives}
        \square(\nabla^m\ric) = \sum_{i=0}^m \left[\nabla^i\mathrm{Rm} * \nabla^{m-i}\ric\right],
    \end{align}
    and consequently,
    \begin{align} \label{eq:heat-eqn-ricci-derivates-norm-sqaured}
        \square|\nabla^m\ric|^2= -2|\nabla^{m+1}\ric|^2 + \sum_{i=0}^m [\nabla^i \mathrm{Rm} * \nabla^{m-i}\ric*\nabla^m\ric].
    \end{align}
    \begin{proof}
        The proof of the previous lemma tells us that \eqref{eq:heat-eqn-ricci-higher-order-derivatives} holds when $m=1$. Let us now suppose that  \eqref{eq:heat-eqn-ricci-higher-order-derivatives} holds up to some $m\in \mathbb{N}$. Then
        \begin{align}
            \partial_t(\nabla^{m+1}\ric) &= \nabla \partial_t(\nabla^m\ric) + \nabla^m \ric * \nabla \ric \\ &= \nabla \left[\Delta(\nabla^m\ric)+\sum_{i=0}^m \left[\nabla^i\mathrm{Rm}*\nabla^{m-i}\ric\right]\right] + \nabla^m\ric*\nabla\ric. \label{eq:ricci-derivative-evolution-3}
        \end{align}
        Using the standard commutation formula for exchanging a covariant derivative and a Laplacian, we obtain that
        \begin{align} \label{eq:ricci-derivative-evolution-1}
            \nabla \Delta(\nabla^m \ric) &= \Delta(\nabla^{m+1}\ric) + \mathrm{Rm}*\nabla^{m+1}\ric + \nabla \mathrm{Rm}*\nabla^m \ric.
        \end{align}
        Furthermore, since
        \begin{align}
            \nabla(\nabla^i \Rm * \nabla^{m-i}\ric) = \nabla^{i+1}\Rm * \nabla^{(m+1)-(i+1)}\ric + \nabla^i \mathrm{Rm} * \nabla^{(m+1)-i}\ric,
        \end{align}
        it follows that
        \begin{align} \label{eq:ricci-derivative-evolution-2}
            \nabla\left(\sum_{i=0}^m \left[\nabla^i\mathrm{Rm}*\nabla^{m-i}\ric\right]\right) = \sum_{i=0}^{m+1} \nabla^i \Rm * \nabla^{(m+1)-i}\ric.
        \end{align}
        Substituting \eqref{eq:ricci-derivative-evolution-1} and \eqref{eq:ricci-derivative-evolution-2} into \eqref{eq:ricci-derivative-evolution-3}, we obtain that
        \begin{align}
            \square(\nabla^{m+1}\ric) &= \sum_{i=0}^{m+1} \nabla^i \Rm * \nabla^{(m+1)-i}\ric \\ &+ \Rm *\nabla^{m+1} \ric + \nabla \Rm * \nabla^m \ric +\nabla^m \ric * \nabla \ric.
        \end{align}
        Observing that the final three terms may be absorbed into the first completes the inductive step and in turn the proof of the lemma.
    \end{proof}
\end{lemma}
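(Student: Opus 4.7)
The plan is to prove both formulas by a single induction on $m$, with the $m=1$ case supplied by Lemma \ref{norm-squared-of-first-derivative-ricci-ac}. The scheme is: first establish the evolution of $\nabla^m\ric$ in the schematic form \eqref{eq:heat-eqn-ricci-higher-order-derivatives}, and then deduce the evolution of its squared norm from a standard Bochner-type identity. The key technical ingredient throughout is the commutation formula $\nabla \Delta T - \Delta \nabla T = \Rm * \nabla T + \nabla \Rm * T$ for an arbitrary tensor $T$, which I will use once per induction step.

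For the inductive step of \eqref{eq:heat-eqn-ricci-higher-order-derivatives}, I would differentiate once in space using the product rule
\[\partial_t(\nabla^{m+1}\ric) = \nabla \partial_t(\nabla^m\ric) + (\partial_t \Gamma) * \nabla^m \ric,\]
where $\partial_t \Gamma = \nabla \ric$ schematically. Applying the inductive hypothesis inside the first term produces $\nabla \Delta(\nabla^m \ric)$ plus $\nabla$ of the schematic sum; the commutation formula converts the first into $\Delta(\nabla^{m+1}\ric) + \Rm * \nabla^{m+1}\ric + \nabla \Rm * \nabla^m\ric$, while the Leibniz rule distributes the derivative across each summand, shifting an index from $\ric$ to $\Rm$ or vice versa without leaving the range $0 \leq i \leq m+1$. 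All the newly generated terms (including the $\partial_t \Gamma$ contribution $\nabla^m\ric * \nabla\ric$) are of the form $\nabla^i \Rm * \nabla^{(m+1)-i}\ric$ for some admissible $i$, so they are absorbed into the schematic sum, closing the induction.

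For \eqref{eq:heat-eqn-ricci-derivates-norm-sqaured}, I will use the general identity
\[\square |T|^2 = 2\langle \square T, T\rangle - 2|\nabla T|^2 + \ric * T * T,\]
valid for any $(0,k)$-tensor $T$ along a Ricci flow (the $\ric*T*T$ piece arises from differentiating the metric contractions, using $\partial_t g = -2\ric$). Taking $T = \nabla^m \ric$ and substituting the formula from \eqref{eq:heat-eqn-ricci-higher-order-derivatives} into the $\langle \square T, T\rangle$ term yields precisely $-2|\nabla^{m+1}\ric|^2$ plus a sum of terms of the form $\nabla^i \Rm * \nabla^{m-i}\ric * \nabla^m \ric$; the residual $\ric * \nabla^m \ric * \nabla^m \ric$ from the metric variation is the $i=0$ summand (up to the convention $\nabla^0 \Rm = \Rm$, which schematically subsumes $\ric$), so it, too, is absorbed into the sum.

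The main obstacle is purely bookkeeping: keeping the index ranges clean under the Leibniz rule and verifying that every spurious term produced by commuting $\nabla$ past $\Delta$ or by differentiating the metric contractions actually lies in the allowed schematic family $\{\nabla^i \Rm * \nabla^{m-i}\ric\}_{i=0}^m$. Since the $*$-notation collapses constants and permutations of contractions, this amounts to checking that the total number of covariant derivatives distributed across $\Rm$ and $\ric$ is preserved, which is manifest from the Leibniz rule. No analytic input beyond the standard commutation identity is needed.
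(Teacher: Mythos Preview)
Your proposal is correct and follows essentially the same route as the paper: induction on $m$ with base case $m=1$, the identity $\partial_t(\nabla^{m+1}\ric)=\nabla\partial_t(\nabla^m\ric)+(\partial_t\Gamma)*\nabla^m\ric$, the commutation formula $\nabla\Delta T=\Delta\nabla T+\Rm*\nabla T+\nabla\Rm*T$, and the Leibniz rule to redistribute derivatives across the schematic sum. Your derivation of \eqref{eq:heat-eqn-ricci-derivates-norm-sqaured} via $\square|T|^2=2\langle\square T,T\rangle-2|\nabla T|^2+\ric*T*T$ is in fact slightly more explicit than the paper, which simply records the second formula as a consequence.
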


\subsection{Evolution equations for quantities evolving along a modified backward heat flow}

\begin{lemma} \label{eigenvalue-evolution-1}
    Suppose we have a coupled system $(g(t),f(t))$ of Riemannian metrics and smooth functions on a compact manifold $M^n$ satisfying
    \begin{align} \label{eq:backward-modified-heat-flow-appendix}
        \begin{cases}
            \partial_tg = -2\ric - \hess_f \\ \partial_tf = -\scal - \Delta f.
        \end{cases}
    \end{align}  
    If $\partial_tu=\Delta_fu$ and $\partial_tv = \Delta_fv$, then the weighted $L^2$-inner product of $u$ and $v$ and the weighted $L^2$-norm of $|\nabla u|$ satisfy
    \begin{align}
        \partial_t\langle u,v \rangle_f = -2\int_M \langle \nabla u,\nabla v \rangle e^{-f} dV_g \quad \mathrm{and} \quad \partial_t||\nabla u||^2_{L^2(e^{-f}dV_g)} = -2\int_M |\nabla^2u|^2 e^{-f}dV_g.
    \end{align}
    \begin{proof}
        Using the product rule, we compute that
        \begin{align}
            \partial_t(uv)=(\partial_tu)v + u(\partial_tv) = v\Delta_fu + u\Delta_fv = \Delta_f(uv) - 2\langle \nabla u,\nabla v \rangle.
        \end{align}
        Since $\Delta_f(uv)e^{-f}$ integrates to zero and $\partial_t(e^{-f}dV_g)=0$, multiplying this series of equations by $e^{-f}$ and integrating over $M$ gives the first formula. As for the second formula, the chain rule and the weighted Bochner formula imply that
        \begin{align}
            \partial_t|\nabla u|^2 &= 2\langle \nabla(\partial_tu),\nabla u \rangle - (\partial_tg)(\nabla u,\nabla u \rangle \\ &= 2\langle \nabla(\Delta_fu),\nabla u \rangle + 2\ric_f(\nabla u,\nabla u ) \\ &= \Delta_f|\nabla u|^2 - 2|\nabla^2u|^2,
        \end{align}
        so multiplying this series of equations by $e^{-f}$ and integrating over $M$ yields the second formula.
    \end{proof}
\end{lemma}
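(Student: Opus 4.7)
The plan is to exploit the structural feature underlying the system \eqref{eq:backward-modified-heat-flow-appendix}: the weighted volume form $e^{-f}\,dV_g$ is preserved in time, as the evolution of $g$ is engineered so that $\partial_t(dV_g)$ cancels $\partial_t(e^{-f})$. With the measure invariant, all time derivatives commute with integration, and the proof reduces to deriving two pointwise evolution identities and then applying the self-adjointness of $\Delta_f$ on $L^2(e^{-f}\,dV_g)$ — the latter being the standard fact that $\int_M \Delta_f h\,e^{-f}\,dV_g=0$ for smooth $h$ on a compact manifold, obtained from the divergence theorem applied to $e^{-f}\nabla h$.

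For the inner-product formula, I would combine $\partial_t(uv)=v\Delta_f u+u\Delta_f v$ (immediate from the drift-heat equations) with the Leibniz identity for the drift Laplacian,
\begin{align*}
\Delta_f(uv)=v\Delta_f u+u\Delta_f v+2\langle \nabla u,\nabla v\rangle,
\end{align*}
which follows from the product rules for $\Delta$ and for the first-order operator $\nabla_{\nabla f}$. Rearranging gives $\partial_t(uv)=\Delta_f(uv)-2\langle \nabla u,\nabla v\rangle$; integrating against $e^{-f}\,dV_g$ kills the drift-Laplacian term by self-adjointness and leaves exactly the claimed identity.

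For the gradient-norm formula, I would work in local coordinates, writing $|\nabla u|^2=g^{ij}\partial_iu\,\partial_ju$, and split the time derivative in two. The variation of the inverse metric, via $\partial_t g^{ij}=-g^{ik}g^{j\ell}\partial_t g_{k\ell}$ together with the hypothesized evolution of $g$, contributes precisely $2\,\ric_f(\nabla u,\nabla u)$. The remaining term $2\langle \nabla \partial_t u,\nabla u\rangle=2\langle \nabla\Delta_f u,\nabla u\rangle$ is then handled by the weighted Bochner identity
\begin{align*}
\tfrac{1}{2}\Delta_f|\nabla u|^2=|\nabla^2 u|^2+\langle \nabla u,\nabla \Delta_f u\rangle+\ric_f(\nabla u,\nabla u),
\end{align*}
which I would derive in a few lines from the classical Bochner formula by rewriting $\Delta=\Delta_f+\nabla_{\nabla f}$ and collecting the Hessian cross-terms into an $\hess_f(\nabla u,\nabla u)$ correction that promotes $\ric$ to $\ric_f$. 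Substituting, the pair of $\ric_f(\nabla u,\nabla u)$ terms cancels exactly, yielding the clean pointwise evolution $\partial_t|\nabla u|^2=\Delta_f|\nabla u|^2-2|\nabla^2 u|^2$. Integrating against the preserved measure and again discarding the drift Laplacian by self-adjointness completes the proof.

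The only substantive bookkeeping point — and the main obstacle to a clean derivation — is making sure that the $\ric_f$ contribution from $\partial_t g^{ij}$ cancels precisely against the one emerging from the weighted Bochner formula. This cancellation is the whole reason the evolution of $g$ carries a Hessian-of-$f$ term alongside the Ricci flow term; it is the structural glue that makes the drift-heat equation $\partial_t u=\Delta_f u$ the natural companion of \eqref{eq:backward-modified-heat-flow-appendix} and that permits such tidy monotonicity formulas along the coupled flow.
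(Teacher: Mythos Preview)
Your proposal is correct and follows essentially the same approach as the paper: derive the pointwise identities $\partial_t(uv)=\Delta_f(uv)-2\langle\nabla u,\nabla v\rangle$ and $\partial_t|\nabla u|^2=\Delta_f|\nabla u|^2-2|\nabla^2 u|^2$ (the latter via the weighted Bochner formula and the $\ric_f$ cancellation you describe), then integrate against the invariant weighted measure and use self-adjointness of $\Delta_f$. Your write-up is somewhat more explicit about the intermediate steps, but the argument is the same.
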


\begin{corollary} \label{eigenvalue-evolution-2}
    Consider the quantities
\begin{align}
    J_{uv}(t) = \int (uv)e^{-f}, \quad D_{uv}(t) = \int \langle \nabla u,\nabla v \rangle e^{-f}, \quad I_u(t) = J_{uu}(t), \quad E_u(t)=D_{uu}(t).
\end{align}
If $\partial_tu = \Delta_fu$ and $\partial_tv = \Delta_fv$, then along \eqref{eq:backward-modified-heat-flow-appendix}, we have that
\begin{align}
    &J_{uv}'(t) = -2D_{uv}(t), \quad I_u'(t) = -2E_u(t), \quad E_u'(t) = -2\int_M |\nabla^2u|^2 e^{-f} \leq 0.
\end{align}
\end{corollary}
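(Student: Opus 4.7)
Corollary \ref{eigenvalue-evolution-2} is a clean repackaging of Lemma \ref{eigenvalue-evolution-1} using the abbreviated notation that Section \ref{eigenvalue-bounds} needs for its eigenvalue derivative computation. My plan is therefore to match each of the bookkeeping quantities $J_{uv}$, $D_{uv}$, $I_u$, $E_u$ with an object that already appears in Lemma \ref{eigenvalue-evolution-1}, read the three asserted identities off as direct translations, and then observe that the final inequality is automatic from pointwise non-negativity of $|\nabla^2 u|^2$.

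Concretely, $J_{uv}(t)=\int_M uv\,e^{-f}\,dV_g$ is precisely the weighted inner product $\langle u,v\rangle_f$ from Lemma \ref{eigenvalue-evolution-1}, and $D_{uv}(t)=\int_M\langle\nabla u,\nabla v\rangle\,e^{-f}\,dV_g$ is precisely the integral on the right-hand side of its first formula. That formula therefore reads $J_{uv}'(t)=-2D_{uv}(t)$, and specializing to $v=u$ (so that $J_{uu}=I_u$ and $D_{uu}=E_u$) immediately yields $I_u'(t)=-2E_u(t)$. For the third identity, note that $E_u(t)=\|\nabla u\|^2_{L^2(e^{-f}\,dV_g)}$, so the second formula of Lemma \ref{eigenvalue-evolution-1} translates directly to $E_u'(t)=-2\int_M|\nabla^2 u|^2\,e^{-f}\,dV_g$, and the sign $E_u'(t)\leq 0$ follows from the pointwise non-negativity of the integrand.

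There is essentially no obstacle: the real analytic work, namely the evolution of $\langle u,v\rangle_f$ (which uses the constancy of the weighted volume form $e^{-f}\,dV_g$ along the coupled system) and the weighted Bochner computation giving $\partial_t|\nabla u|^2=\Delta_f|\nabla u|^2-2|\nabla^2 u|^2$, was already carried out in Lemma \ref{eigenvalue-evolution-1}. The only item deserving verification is that the hypotheses line up, but the corollary imports verbatim the hypotheses of the lemma: $(g(t),f(t))$ satisfies \eqref{eq:backward-modified-heat-flow-appendix}, and $u$ and $v$ each satisfy the drift heat equation $\partial_t=\Delta_f$ along that coupled flow.
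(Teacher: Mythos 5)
Your proposal is correct and matches the paper's intent exactly: the paper states this as a corollary with no separate proof precisely because, as you observe, the three identities are direct translations of the two formulas in Lemma \ref{eigenvalue-evolution-1} (with $v=u$ for the second), and the sign of $E_u'$ is immediate from the non-negativity of $|\nabla^2 u|^2$.
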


\bibliographystyle{alpha}
\bibliography{refs}

\end{document}